\documentclass[12pt]{article}
\usepackage[margin=1in]{geometry}
\usepackage{amsmath,amsthm,amssymb}
\usepackage{verbatim}
\usepackage{tikz}
\usepackage{mathtools}
\usepackage{hyperref}
\usepackage{enumitem}
\usepackage[mathlines]{lineno}
\usepackage{booktabs}

\tikzstyle{vertex}=[inner sep=2pt,thick,shape=circle,draw=black,fill=white]
\tikzstyle{Vertex}=[inner sep=1pt,thick,shape=circle,draw=black,fill=white]
\tikzstyle{Edge}=[thick]
\tikzstyle{tiny_vertex}=[draw,thick,fill=white,circle,inner sep=1.5pt]
\tikzstyle{full}=[draw,thick,fill=black,circle,inner sep=2pt]
\tikzstyle{empty}=[draw,color=black!40!white,thick,fill=white,circle,inner sep=2pt]

\newtheorem{thm}{Theorem}[section]
\newtheorem{cor}[thm]{Corollary}
\newtheorem{lem}[thm]{Lemma}
\newtheorem{prop}[thm]{Proposition}
\newtheorem{conj}[thm]{Conjecture}

\theoremstyle{definition}

\newtheorem{defn}[thm]{Definition}

\allowdisplaybreaks

\definecolor{forestgreen}{rgb}{0.13, 0.55, 0.13}

\DeclareMathOperator{\dist}{dist}
\DeclareMathOperator{\spec}{spec}

\DeclareMathOperator{\diam}{diam}
\DeclareMathOperator*{\coal}{\circ}

\newcommand{\Dq}{\mathcal{D}_q}
\newcommand{\Df}{\mathcal{D}_f}
\newcommand{\D}{\mathcal{D}}

\title{A Cospectral Construction for the Generalized Distance Matrix}
\author{Ori Friesen\footnote{Macalester College, Saint Paul, MN, USA \texttt{ofriesen@macalester.edu}}
\and Cecily Kolko\footnote{Smith College, Northampton, MA, USA \texttt{ckolko@smith.edu}}
\and Nick Layman\footnote{Iowa State University, Ames, IA, USA \texttt{nlayman@iastate.edu}}
\and Kate Lorenzen\footnote{Linfield University, McMinnville, OR, USA \texttt{klorenzen@linfield.edu}}
\and Sarah Zaske \footnote{Grand Valley State University, Allendale, MI, USA \texttt{zaskes@mail.gvsu.edu}}
\and Amy Zeigler\footnote{Augustana College, Rock Island, IL, USA
\texttt{amyzeigler21@augustana.edu}}}
\date{\today}

\begin{document}
\maketitle 

\begin{abstract}
     The generalized distance matrix of a graph is a matrix in which the $(i,j)$th entry is a function, $f$, of the distance between vertex $i$ and vertex $j$. Depending on the choice of $f$, this family of matrices includes both the adjacency matrix and the traditional distance matrix. We present a cospectral construction for the generalized distance matrix akin to Godsil-McKay Switching. We also investigate a special case of the generalized distance matrix: the exponential distance matrix, which is a matrix where every entry is a value $q$ raised to the power of the distance between the vertices.  We give an upper bound on the values of $q$ needed to show a pair of graphs is cospectral for all values of $q$ corresponding to the diameter of the graphs.  
     We also give cospectral constructions unique to value $q=1/2$.
\end{abstract}

\section{Introduction}
A graph $G$ is a set of vertices, $V(G)$, and a set of edges, $E(G)$, which connect two vertices to one another. There are many ways to represent a graph with a matrix. For example, in an \emph{adjacency matrix} $A$, we would let $A_{i,j} = 1$ if two vertices share an edge, or are \emph{adjacent}, and $A_{i,j} = 0$ otherwise. We can determine many structural properties of a graph by looking at the \emph{spectrum}, the multiset of eigenvalues of the matrix. The connection between a graph structure and the eigenvalues of its matrix representation is the basis of spectral graph theory.  

A large question in spectral graph theory, initially thought to be true, is \textit{Can we determine a graph from its eigenvalues?} However, not all graphs can be determined by their spectrum. For example, the two graphs in Figure \ref{fig:GM-Switching} are a non-isomorphic pair with the same spectrum. We will call such graphs  \emph{cospectral}. 

\begin{figure}[ht]
    \centering
    \begin{tabular}{cc}
       \begin{tikzpicture}
    \node[vertex] (b1) at ({cos(180)},{sin(180)}) {};
\node[vertex] (b2) at ({cos(135)},{sin(135)}) {};
\node[vertex] (b3) at ({cos(90)},{sin(90)}) {};
\node[vertex] (b4) at ({cos(45)},{sin(45)}) {};
\node[vertex] (b5) at ({cos(0)},{sin(0)}) {};
\node[vertex] (b6) at ({cos(-45)},{sin(-45)}) {};
\node[vertex] (b7) at ({cos(-90)},{sin(-90)}) {};
\node[vertex] (b8) at ({cos(-135)},{sin(-135)}) {};

\node[vertex] (a1) at (0,0) {$v$};

\draw[thick] (b1)--(b2)--(b3)--(b4)--(b5)--(b6)--(b7)--(b8)--(b1);
\draw[thick] (b1)--(a1)--(b2);
\draw[thick] (b8)--(a1)--(b5);
\end{tikzpicture}  & \begin{tikzpicture}
\node[vertex] (b1) at ({cos(180)},{sin(180)}) {};
\node[vertex] (b2) at ({cos(135)},{sin(135)}) {};
\node[vertex] (b3) at ({cos(90)},{sin(90)}) {};
\node[vertex] (b4) at ({cos(45)},{sin(45)}) {};
\node[vertex] (b5) at ({cos(0)},{sin(0)}) {};
\node[vertex] (b6) at ({cos(-45)},{sin(-45)}) {};
\node[vertex] (b7) at ({cos(-90)},{sin(-90)}) {};
\node[vertex] (b8) at ({cos(-135)},{sin(-135)}) {};

\node[vertex] (a1) at (0,0) {$v$};

\draw[thick] (b1)--(b2)--(b3)--(b4)--(b5)--(b6)--(b7)--(b8)--(b1);
\draw[thick] (b3)--(a1)--(b4);
\draw[thick] (b7)--(a1)--(b6);
    \end{tikzpicture} \\
       (a)  &  (b)
    \end{tabular}
    \caption{Two graphs that are cospectral for the adjacency matrix by Godsil-McKay switching with switching set $D=\{v\}$ and the remaining vertices form a regular graph which is an equitable partition of size one.}
    \label{fig:GM-Switching}
\end{figure}
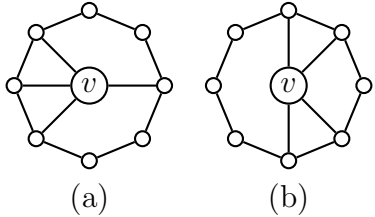

The existence of cospectral graphs can be understood in one of two ways. The first is that two graphs will sometimes happen to share a spectrum because of some \textit{algebraic accident}. If this was the case, we would expect that the differences between a pair of cospectral graphs would be random from one pair to another. The other, more prevailing belief is that these graphs and their structural differences reveal a weakness of the spectrum and can be described with a structural pattern. These patterns are described with cospectral graph constructions. 

One of the more famous constructions of cospectral graphs is by Godsil and McKay \cite{gmckay}, which we will refer to as Godsil-McKay Switching. In their construction, they partition the graph into an equitable partition and a switching set. When certain degree requirements are met, the original graph and the graph resulting from \emph{switching} (changing edges to non-edges, and non-edges to edges) between the equitable partition and the switching set will be cospectral. For example, see Figure \ref{fig:GM-Switching}. Godsil-McKay switching was originally proved for the adjacency matrix, but versions of it have been extended to other matrices; namely, Haemers and Spence in \cite{HS95} extended switching to any matrix of the form $M=\alpha I + \beta A$, which includes the combinatorial Laplacian and the normalized Laplacian. 

There are ways to represent graphs with other matrices beyond the adjacency matrix. 
 The \emph{distance matrix} entries store the distance, the shortest path between two vertices $i,j$, denoted $\dist(i,j)$. Unlike the adjacency matrix, which is a $0-1$ matrix, the distance matrix is dense. Many of the combinatorial techniques used to prove spectral information about the adjacency matrix are more difficult or impossible for the distance matrix, which has seen renewed interest recently in terms of cospectral constructions. In particular, Heysse in \cite{distanceconstruction} showed a cospectral construction using local switching between one vertex and a special set of four vertices. Lorenzen in \cite{cousinvertices} showed a switching-like method between two sets of vertices. 

In addition to the distance matrix, Bin Mahamud et. al. in \cite{coalescing} defined the \emph{generalized distance matrix} $\Df$ as the matrix where $(\Df)_{i,j} = f(\dist(i,j))$ for any function $f$. 
This matrix captures many variations of the distance matrix. In \cite{coalescing}, the authors also presented a cospectral construction for gluing small graphs together called \emph{coalescing}. This mirrors gluing techniques previously developed for the adjacency matrix \cite{Schwenk} and the signless Laplacian \cite{Butler22}. 

We will show sufficient graph structural properties so that two graphs are cospectral for $\Df$, mirroring Godsil-McKay Switching. This expands some of Heysse's work in \cite{distanceconstruction} to the generalized distance matrix and explicitly finds a similarity matrix for these cospectral graphs. This construction unifies and demonstrates that many graphs that are cospectral for one matrix representation are cospectral for many well-studied matrix representations. 

One of the variations of the distance matrix that $\Df$ encapsulates is the \emph{exponential distance matrix} $\Dq$, which is a matrix such that $(\Dq)_{i,j} = q^{\dist(i,j)}$. This matrix was originally introduced by Bapat, Lal, and Pait in \cite{BLP06} where they showed that like the distance matrix, all trees have the same determinant. Many of the known cospectral constructions for distance or variations of distance matrices have not extended to this matrix (except when $\diam(G)=2$) because of the exponential entries. 

In addition to our cospectral construction for $\Df$, we provide a sufficient number of values of $q$ to extend cospectrality to an arbitrary function. Further, we provide a construction of a family of graphs that are only cospectral for $q=\frac12$. Finally, we list some open problems in relation to cospectral constructions and the exponential distance matrix. 

\subsection{Definitions and Notation}
Let $G$ be a graph with vertex set $V(G)$ and edge set $E(G)$. We say that two vertices are adjacent, or neighbors, denoted $v \sim u$, if $\{v,u\}\in E(G)$. The set of all vertices adjacent to a vertex $v$ is called the neighborhood of $v$, $N(v)$. The \emph{degree} of $v$, $\deg(v)$, is the number of times $v$ is an endpoint of an edge. Note: $|N(v)|=\deg(v)$. A \emph{walk} is a sequence of vertices where each consecutive pair is adjacent. A walk with no repeated vertices is called a \emph{path}. The shortest walk between two vertices is called the \emph{distance}, denoted $\dist(u,v)$. The largest distance in a graph is called the \emph{diameter}, mainly $\diam(G)=\max \{\dist(u,v)\}$.

Given two graphs $H$ and $G$, we say that $H$ is a \emph{subgraph} of $G$ if a copy of $H$ can be found in $G$. A path graph $P_n$, where $n$ is the number of vertices of the graph. We say a graph is $k$-regular if every vertex has exactly degree $k$. 

Let $M^G$ be a matrix representation of graph $G$. The \emph{characteristic polynomial} of $M^G$, denoted $\mathcal{P}(x)=\det(xI-M^G)$, is the polynomial whose roots are the multiset of eigenvalues of $M^G$; this is also called the spectrum, $\spec(M^G)$.  Two matrices $A$, $B$ are similar if there exists a \emph{similarity matrix} $P$ such that $P^{-1}AP = B$; moreover, similar matrices have the same spectrum. We define $J_n$ to be an $n \times n$ all-ones matrix.

The distance matrix, $\D^G$, of a graph $G$, is the matrix such that $(\D^G)_{i,j}=\dist(i,j)$. The generalized distance matrix, $\Df^G$ of a graph $G$ is the matrix such that $(\Df^G)_{i,j}=f(\dist(i,j))$ for any function $f: \mathbb{R}\to \mathbb{R}$ (not necessarily continuous). The exponential distance matrix, $\Dq^G$, of a graph $G$ is the matrix such that $(\Dq^G)_{i,j}=q^{\dist(i,j)}$ for some $q$. We will assume that $0\leq q \leq 1$, but there are some interesting examples of cospectral graphs when $q$ is a complex or larger number (see \cite{expdist}).

\section{Godsil-McKay Switching for the Generalized Distance Matrix}

Godsil-McKay Switching is one of the first and most prevalent cospectral constructions for the adjacency matrix. For a graph $G$, we partition the vertices $V(G)=A_1 \cup A_2 \cup \cdots \cup A_k \cup D$ where $A_1, \ldots, A_k$ form an equitable partition and every vertex in $D$ is adjacent to half, all, or none of the vertices in each $A_i$. For example, in Figure \ref{fig:GM-Switching}(a), the switching set $D=\{v\}$ and the other vertices are $A_1$, inducing a regular graph. Notice that $v$ is adjacent to half of the vertices of $A_1$. By \emph{switching} the adjacencies between vertices in $D$ that are adjacent to half the vertices in $A_i$, we create a graph that has the same spectrum as the original. See Figure \ref{fig:GM-Switching}(b) for the cospectral mate of (a). The proof of this switching method introduces a similarity matrix between the two graph matrices. 

Godsil-McKay Switching explains many pairs of cospectral graphs for the adjacency matrix \cite{HS95}. It has also been extended to other matrices such as the Laplacian and normalized Laplacian. Except when the diameter of a graph is at most two or is distance regular, the construction has not previously been extended to distance matrices. 

In Section \ref{subsec:cospecconst}, we present a cospectral construction for the distance matrix using the Godsil-McKay similarity matrix. We'll call this matrix $\hat{S}$, and our proof follows their style. We'll then extend our construction to the generalized distance matrix. 

\subsection{A Cospectral Construction for the Distance Matrix}
\label{subsec:cospecconst}
Let us begin with some linear algebra results.

\begin{lem}

Let $\hat{S} = \frac{2}{n}J_{n} - I_{n}$.
Let $\Vec{x}$ be the $n-$dimensional vector with $\frac{n}{2}$ values being $a$ and the other $\frac{n}{2}$ values being $b$ and $\vec{y}=(a+b)\vec{1}-\vec{x}$ . 
    Then $\hat{S}\Vec{x}=\vec{y}$ .
    Similarly, $\Vec{x}^T \hat{S} = \vec{y}^T$.
    \label{lem:s_hat_columnswitching}
    
\end{lem}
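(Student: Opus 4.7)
The plan is a direct computation that exploits the fact that $J_n \vec{x}$ depends only on the sum of the entries of $\vec{x}$. First I would compute the column sum: since $\vec{x}$ has $n/2$ entries equal to $a$ and $n/2$ entries equal to $b$, we have $\vec{1}^T \vec{x} = \tfrac{n}{2}(a+b)$, and therefore $J_n \vec{x} = (\vec{1}\vec{1}^T)\vec{x} = \tfrac{n}{2}(a+b)\,\vec{1}$. Multiplying by $\tfrac{2}{n}$ collapses the coefficient cleanly to $(a+b)\vec{1}$.

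Next I would substitute this into the definition of $\hat{S}$. Writing out
\[
\hat{S}\vec{x} = \Bigl(\tfrac{2}{n}J_n - I_n\Bigr)\vec{x} = \tfrac{2}{n}J_n\vec{x} - \vec{x} = (a+b)\vec{1} - \vec{x},
\]
which is exactly $\vec{y}$ by its definition.

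For the row-vector statement, the argument is symmetric because $J_n$ and $I_n$ are both symmetric. I would observe that $\vec{x}^T J_n = \tfrac{n}{2}(a+b)\,\vec{1}^T$ by the same entry-sum calculation, and then expand $\vec{x}^T \hat{S}$ in the same way to obtain $(a+b)\vec{1}^T - \vec{x}^T = \vec{y}^T$. Alternatively, one can simply transpose the first identity since $\hat{S}^T = \hat{S}$.

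There is no real obstacle here; the only thing to be careful about is that the hypothesis requires \emph{exactly} $n/2$ entries equal to $a$ (so $n$ is even), because that is what makes $\tfrac{2}{n}\cdot\tfrac{n}{2}(a+b)$ simplify to $a+b$ independently of how many of each value appear. This even-split condition is the whole reason the constant $\tfrac{2}{n}$ in $\hat{S}$ is chosen as it is, and it is what will later license the switching of a single vertex against an equitable half of a part.
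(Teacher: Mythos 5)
Your proposal is correct and follows essentially the same route as the paper: compute $J_n\vec{x}=\tfrac{n}{2}(a+b)\vec{1}$ from the entry sum, expand $\hat{S}\vec{x}=\tfrac{2}{n}J_n\vec{x}-\vec{x}=(a+b)\vec{1}-\vec{x}=\vec{y}$, and obtain the row version by transposing using the symmetry of $\hat{S}$. No issues.
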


\begin{proof}
    As $\hat{S} = \frac{2}{n}J_{n} - I_{n}$, we can rewrite $\hat{S}\vec{x}$ as $\left(\frac{2}{n}J_{n} - I_{n}\right)\vec{x}$. We distribute $\vec{x}$ to get
    \begin{align*}
        \hat{S}\vec{x} = \frac{2}{n}J_{n}\vec{x} - I_{n}\vec{x}.
    \end{align*}
    As $\vec{x}$ is a vector with $\frac{n}{2}$ values of $a$ and $\frac{n}{2}$ values of $b$, then $J_{n}\vec{x}$ is the $n$-dimensional vector $\frac{n(a+b)}{2} \vec{1}$. Thus, we get
    \begin{align*}
        \hat{S}\vec{x} = \frac{2}{n} \left( \frac{n(a+b)}{2} \vec{1} \right) - \vec{x},
    \end{align*}
    which simplifies to $\hat{S}\vec{x} = (a+b)\vec{1} - \vec{x}$. As we define $\vec{y}=(a+b)\vec{1}-\vec{x}$, then $\hat{S}\vec{x} = \vec{y}$. Since $\hat{S}$ is symmetric, taking the transpose of each side yields $\vec{x}^T \hat{S} = \vec{y}^T$.
    
\end{proof}

We then show that multiplying $\hat{S}A\hat{S}$, where $A$ is a matrix with conditions stated in Lemma \ref{lem:simplifyshatmult}, returns $A$. We use this idea in our distance switching construction proof. 

\begin{lem}
\label{lem:simplifyshatmult} 
Let $A$ be an $n \times n$ matrix such that it has constant row and column sums of $m$. Let $\hat{S} = \frac{2}{n}J_{n} - I_{n}$. Then  $\hat{S}A\hat{S}=A$. 
\end{lem}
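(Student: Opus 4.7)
The plan is to expand $\hat{S}A\hat{S}$ directly and watch all the $J_n$-terms cancel, using the constant row/column sum hypothesis on $A$ to move $J_n$ past $A$ cleanly.

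The three algebraic facts I would rely on are: (i) since $A$ has constant row sums equal to $m$, we have $A\vec{1} = m\vec{1}$, so $AJ_n = A\vec{1}\vec{1}^{\,T} = m\vec{1}\vec{1}^{\,T} = mJ_n$; (ii) similarly, constant column sums $m$ give $\vec{1}^{\,T}A = m\vec{1}^{\,T}$, hence $J_nA = mJ_n$; and (iii) $J_n^{\,2} = nJ_n$. With these in hand the computation is mechanical.

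Concretely, I would write
\begin{align*}
\hat{S}A\hat{S} &= \left(\tfrac{2}{n}J_n - I_n\right)A\left(\tfrac{2}{n}J_n - I_n\right) \\
&= \tfrac{4}{n^{2}}J_nAJ_n - \tfrac{2}{n}J_nA - \tfrac{2}{n}AJ_n + A,
\end{align*}
then substitute $J_nA = AJ_n = mJ_n$ and $J_nAJ_n = mJ_n^{\,2} = mnJ_n$ to obtain
\begin{align*}
\hat{S}A\hat{S} &= \tfrac{4m}{n}J_n - \tfrac{2m}{n}J_n - \tfrac{2m}{n}J_n + A = A,
\end{align*}
which is exactly the claim.

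There is really no obstacle here; the statement is a short algebraic identity and the only thing to verify carefully is that the row- and column-sum conditions are both needed (one to commute $J_n$ past $A$ on each side). I would probably also add a one-line remark that this is precisely why the ``equitable partition / switching set'' hypotheses in the Godsil–McKay style arguments force constant row and column sums on the relevant blocks, since this lemma is the engine that drives the similarity $\hat{S}M\hat{S}$ in the cospectral construction to follow.
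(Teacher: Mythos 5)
Your proof is correct and follows essentially the same route as the paper: expand the product, use $J_nA = AJ_n = mJ_n$ and $J_n^{2} = nJ_n$, and watch the three $J_n$-terms cancel. The only difference is cosmetic grouping of terms in the final simplification.
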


\begin{proof}

Consider $\hat{S}A\hat{S}$. Then, 
    
\begin{align*}
    \left(\frac{2J}{n} - I\right)(A)\left(\frac{2J}{n} - I\right) = \frac{4}{n^2}JAJ - \frac{2}{n}AJ - \frac{2}{n}JA + A 
\end{align*}
    Since the row and column sums of $A$ are $m$, if follows $JA = AJ=mJ$. Thus, we get the following:
\begin{align*}
    \frac{4}{n^2}JAJ - \frac{2}{n}AJ - \frac{2}{n}JA + A  &= \frac{4}{n}\left(\frac{JAJ}{n}- AJ\right) + A\\
    &= \frac{4}{n} \left( \frac{m}{n}J^2-mJ\right)+A\\
    &= \frac{4}{n} \left( \frac{mn}{n}J-mJ\right)+A\\
    &=A
\end{align*}
as desired.
\end{proof}

We will now use the Pigeonhole Principle to prove a fact about the number of neighbors for a vertex in a $\frac n2$-regular graph $G$, which we will later use in our construction. 

\begin{prop}\label{prop: n/2 pigeon hold}
    Let $G$ be an $d$-regular graph with $d \geq \frac{n}{2}$. Then for every subset $X$ of size at least $\frac{n}{2}$, every vertex in $V(G) \backslash X$ has at least one neighbor in $X$.
\end{prop}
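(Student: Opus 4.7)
The plan is to argue by contradiction using a direct counting (pigeonhole) argument. Suppose toward a contradiction that there exists a vertex $v \in V(G) \setminus X$ with no neighbors in $X$. Then every neighbor of $v$ must lie in $V(G) \setminus X$, and in particular in $(V(G) \setminus X) \setminus \{v\}$, since $G$ is a simple graph with no loops.

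The key observation is a size estimate: since $|X| \geq \frac{n}{2}$, we have $|V(G) \setminus X| \leq \frac{n}{2}$, and therefore the set $(V(G) \setminus X) \setminus \{v\}$ has at most $\frac{n}{2} - 1$ elements. But the hypothesis gives $\deg(v) = d \geq \frac{n}{2}$, which means $v$ has at least $\frac{n}{2}$ distinct neighbors, all of which would have to be squeezed into a set of size at most $\frac{n}{2} - 1$. This is the pigeonhole contradiction, and it forces $v$ to have at least one neighbor in $X$ after all.

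I do not anticipate any real obstacle here; the only care needed is in correctly bookkeeping that $v$ itself is not counted among its own neighbors, which is what pushes the bound from $\frac{n}{2}$ down to $\frac{n}{2} - 1$ and creates the strict inequality needed for the contradiction. The conclusion then applies to every $v \in V(G) \setminus X$, completing the proof.
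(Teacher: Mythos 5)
Your proof is correct and is essentially identical to the paper's own argument: both assume a vertex of $V(G)\setminus X$ with no neighbor in $X$, observe that its $\deg(v)\geq \frac{n}{2}$ neighbors would have to fit inside $(V(G)\setminus X)\setminus\{v\}$, a set of size at most $\frac{n}{2}-1$, and derive the contradiction. No changes needed.
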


\begin{proof}
    Let $G$ be an $d$-regular graph with $d\geq \frac{n}{2}$ and $X$ a subset of size $\frac{n}{2}$. Suppose there is a vertex $y \in V(G) \backslash X$ such it has no neighbor in $X$. Since $G$ is regular, it follows that $\deg(y) \geq \frac{n}{2}$, and $y$ is not adjacent to itself. So the $N(y) \subseteq V(G) \backslash (X \cup y)$, but $|V(G) \backslash (X \cup y)|\leq \frac{n}{2}-1$ and $|N(y)|\geq \frac{n}{2}$. Therefore, $y$ must have at least one neighbor in $X$.
\end{proof}

Using the previous Lemmas and Propositions, we propose the following construction for distance cospectral pairs. 

\begin{thm}
Let $A$ be a connected graph with partition $A^1, A^2, \ldots A^m$ where the graph induced by $A^i$ is at least $\frac{|A^i|}{2}$ regular and $|A^i|$ is even. Each vertex $a \in A^i$ must be adjacent to all or none of the vertices in each other $A^j$. Let $B^i$ be a collection of connected graphs $B_1^i, B_2^i, \ldots, B_{\ell}^i$ for $1 \leq i \leq m$. For each $B_j^i$, select half of the vertices in $A^i$ denoted $A_j^i$.

Let $G_1$ be the connected graph such that each vertex $b \in B_j^i$ is adjacent to none, all, or half of the vertices in $A^i$, where if $b$ is adjacent to half of the vertices, it is adjacent to $A_j^i$.  

Let $G_2$ be the connected graph $G_1$ where if $b$ is adjacent to exactly $A_j^i$ (half the vertices of $A^i$) in $G_1$, it will be adjacent to $A^i \backslash A_j^i$ in $G_2$.

Then $G_1$ and $G_2$ are cospectral for the distance matrix.
\label{thm:newgeneralizedconstruction}
\end{thm}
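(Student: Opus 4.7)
The plan is to mimic the classical Godsil--McKay argument and exhibit an explicit similarity matrix. Concretely, I would take
\[
P = \operatorname{diag}\bigl(\hat{S}_1,\hat{S}_2,\ldots,\hat{S}_m,I_{|B|}\bigr),
\]
with $\hat{S}_i = \tfrac{2}{|A^i|}J_{|A^i|}-I_{|A^i|}$ on the coordinates indexed by $A^i$ and the identity on the coordinates indexed by $\bigcup_{k,l} B^k_l$. Because each $\hat{S}_i$ is an involution, $P=P^{-1}$, and proving cospectrality reduces to verifying the block-by-block identity $P\,\D^{G_1}\,P = \D^{G_2}$ with respect to the vertex partition into the $A^i$'s and the $B^k_l$'s.

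For the blocks indexed only by vertices in $A = \bigcup_i A^i$, switching never touches any edge inside $A$, so $\D^{G_1}_{A^i,A^j} = \D^{G_2}_{A^i,A^j}$ and it remains to check $\hat{S}_i\,\D^{G_1}_{A^i,A^j}\,\hat{S}_j = \D^{G_1}_{A^i,A^j}$. When $i=j$, Proposition \ref{prop: n/2 pigeon hold} forces every two vertices of $A^i$ to lie at distance at most $2$, so the block equals $2(J_{|A^i|}-I_{|A^i|})$ minus the adjacency matrix of the induced subgraph on $A^i$; regularity makes its row and column sums constant, and Lemma \ref{lem:simplifyshatmult} applies. When $i\neq j$, the all-or-none condition renders the block a constant matrix, which again has constant marginals, so a rectangular analogue of Lemma \ref{lem:simplifyshatmult} handles it. For the mixed $A^i$--$B^k_l$ blocks I would argue column by column, one vertex $b\in B^k_l$ at a time, and show that $\hat{S}_i$ sends the $b$-column of $\D^{G_1}$ to the $b$-column of $\D^{G_2}$. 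If $k\neq i$, or if $k=i$ and $b$ is joined to all or none of $A^i$, the equitable structure makes the $b$-column a constant vector and hence fixed by $\hat{S}_i$. The substantive case is $k=i$ with $b$ joined to exactly $A^i_l$: Proposition \ref{prop: n/2 pigeon hold} applied to $A^i_l$ ensures every vertex of $A^i\setminus A^i_l$ has a neighbor in $A^i_l$, so the $b$-column reads $1$ on $A^i_l$ and $2$ on its complement. Lemma \ref{lem:s_hat_columnswitching} then swaps these entries to give $2$ on $A^i_l$ and $1$ on its complement, which is precisely the $b$-column of $\D^{G_2}$ after applying the same proposition to $A^i\setminus A^i_l$.

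The hard part is the $B$-blocks $\D^{G_1}_{B^k_l,B^{k'}_{l'}}$, since $P$ acts as the identity on both sides and I therefore need the genuine combinatorial identity $\dist_{G_1}(b,b') = \dist_{G_2}(b,b')$ for all $b,b'\in B$, and shortest paths between $B$-vertices can pass through $A$. The key observation is that whenever $b\in B^i_j$ has exactly $|A^i|/2$ neighbors in $A^i$, Proposition \ref{prop: n/2 pigeon hold} makes that neighbor set a dominating set in $A^i$, and this domination property is preserved on replacing the set by its complement. Thus $b$ reaches every vertex of $A^i$ in at most two steps both before and after the switch, and edges from $A^i$ to the rest of the graph are untouched. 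A short case analysis on whether a shortest $b$--$b'$ path stays inside a single $B^k_l$ or exits into some $A^i$ then shows the length is the same in $G_1$ and $G_2$; combining the four block verifications yields $P\,\D^{G_1}\,P = \D^{G_2}$, so the two distance matrices are similar and hence cospectral.
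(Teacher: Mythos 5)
Your overall strategy---the block-diagonal similarity matrix built from $\hat{S}_i=\tfrac{2}{|A^i|}J-I$, the reduction to a block-by-block check, Lemma \ref{lem:simplifyshatmult} on the $A$--$A$ blocks, and Lemma \ref{lem:s_hat_columnswitching} on the mixed blocks---is exactly the paper's. However, there is a genuine error in your case analysis of the mixed $A^i$--$B^k_l$ blocks. You claim that if $b\in B^i_l$ is joined to \emph{none} of $A^i$, then the $b$-column of distances to $A^i$ is constant and hence fixed by $\hat{S}_i$. That is false. Take $A^1$ a $4$-cycle and $B^1_1$ a two-vertex path $b\sim b'$ with $b'$ adjacent to exactly half of $A^1$ and $b$ adjacent to no vertex of $A^1$: then $b$ is at distance $2$ from the two vertices of $A^1_1$ and at distance $3$ from the other two, so the column is two-valued, and after the switch those two values trade places. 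So the column is neither constant nor equal in $G_1$ and $G_2$, and your argument for this block proves the wrong identity.

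The repair is exactly what the paper does: for such a $b$, route through the nearest vertex $b'\in B^i_l$ attached to $A^i$, so the column takes the two values $\dist(b,b')+1$ and $\dist(b,b')+2$, each occurring $|A^i|/2$ times; Lemma \ref{lem:s_hat_columnswitching} then swaps the two values, which is precisely the effect of the switch on the corresponding column of $\D^{G_2}$. With that case corrected, the rest of your proposal (the $A$--$A$ blocks, the all/half cases, and the $B$--$B$ blocks via the domination observation from Proposition \ref{prop: n/2 pigeon hold}) matches the paper's proof.
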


\begin{proof}
Define $G_1$ and $G_2$ as stated in Theorem \ref{thm:newgeneralizedconstruction}.
Let $S$ be the matrix

\begin{align*}
     S=
     I_{|B|}\oplus\hat{S}_{|A^1|}\oplus\hat{S}_{|A^2|}\oplus \dots \oplus\hat{S}_{|A^m|}
\end{align*}
where $\hat{S}$ is the same as defined in Lemma \ref{lem:simplifyshatmult}. We will show that $S$ is a similarity matrix for $D^{G_1}$ and $D^{G_2}$.

We can partition the distance matrix of $G_1$ as 

\begin{align*}
   \D^{G_1}= \left[ \begin{array}{c|c}
      \D^B & X \\
      \midrule
      X^T & \D^A \\  
\end{array}\right]\\
\end{align*}
with the first $|B|$ rows and columns corresponding to the vertices in each $B^i_j$ and the remaining rows and columns corresponding to the vertices in each $A^i$. Specifically, the submatrix $\D^A$ is structured as follows:

\begin{align*}
    \D^A=\left[\begin{array}{cccc}
        \mathcal{A}^{1} & \mathcal{A}^{1,2} & \dots & \mathcal{A}^{1,m} \\
        \mathcal{A}^{2,1} & \mathcal{A}^2 & \dots & \mathcal{A}^{2,m}\\
        \vdots & \vdots &\ddots & \vdots\\
        \mathcal{A}^{m,1} & \mathcal{A}^{2,m} & \dots & \mathcal{A}^m 
    \end{array}\right]
\end{align*}
Each $\mathcal{A}^i$ block contains the distances between the vertices in each partition of $A$, and each $\mathcal{A}^{i,k}$ contains the distances between vertices in $A^i$ and $A^k$. By our construction, either each vertex $a_i \in A^i$ is adjacent to each $a_k \in A^{k}$ or none are adjacent. In the first case, then $\dist(a_i, a_k) = 1$. If none are adjacent, as each $a_i \in A^i$ has the same neighborhood induced in the subgraph of $A$, any shortest path between every $a_i$ and every $a_k$ will have the same distance. Additionally, this shortest path will be contained in $A$. 

The submatrix $X$ is constructed as follows
\begin{align*}
    X=\left[\begin{array}{cccc}
        X^1 & X^2 & \dots & X^m \\
    \end{array}\right].
\end{align*}

Each row of $X$ represents the distances between a vertex $b$ in each connected graph $B^i_j$ to the vertices in $A$. In particular, each $X^i$ corresponds to the distances from $B$ to the vertices in $A^i$. We claim that for every $b \in B^i_j$ and $a \in A^k$, there are at most two possible distances between $b$ and $a$. In other words, for each row of $X^i$, there are at most two possible distances. If there are two distinct distances, then these distances to $b \in B^i_j$ appear an equal number of times.

We will examine the distances in $X$ by considering distances between the following cases: (1) $B_j^i$ to $A^i$ and (2) $B_j^i$ to $A^k$: 
\begin{enumerate}[label=(\arabic*)]
    \item $B_j^i$ to $A^i$: Let $B_j^i$ and $A^i$ be arbitrary and let $b \in B_j^i$ be adjacent to all or half of the vertices in $A^i$. If $b$ is adjacent to half the vertices in $A^i$, then half of the vertices in $A^i$ are distance 1 from $b$. Furthermore, by Proposition \ref{prop: n/2 pigeon hold}, all other vertices of $A^i$ are distance 2 from $b$. 
    If $b$ is adjacent to all of the vertices in $A^i$, then the distance from $b$ to any $a \in A^i$ is 1.

    Now assume $b \in B_j^i$ is not adjacent to any vertices in $A^i$. Let $b'$ denote the vertex in $B_j^i$ that is adjacent to at least one vertex in $A^i$ such that $\dist(b, b')$ is minimized. Thus, for any $a \in A^i$, $\dist(b, a) = \dist(b, b') + \dist(b', a)$. If $b'$ is adjacent to half of the vertices in $A^i$, as $\dist(b,b')$ does not depend on $a\in A^i$, for any $b \in B_j^i$ not adjacent to any vertex in $A^i$, there are two distances: $\dist(b,b') + 1$ and $\dist(b,b') + 2$, where half are the former and half are the latter. If $b'$ is adjacent to all of the vertices in $A^i$, then $\dist(b,a) = \dist(b,b') + 1$ for all $a \in A^i$.

    \item $B_j^i$ to $A^k$: First, fix $b\in B_j^i$. Then, the distance to all $a_k \in A^k$ is only going to have one value because there exists a shortest path from $b$ to the closest vertex in $A^i$. Since we established the distance from any vertex in $A^i$ to any vertex in $A^k$, we know the distance from $b$ to $a_k$ is this shortest distance from $b$ to the closest vertex in $A^i$ plus the distance from any vertex in $A^i$ to any vertex in $A^k$.

\end{enumerate}

Therefore, each row of $X^i$ has at most two possible values, representing the distances between vertices in each $B_j^i$ and each $A^i$ where if there are two values, they appear an equal number of times.

We now claim that in $G_2$, all the distances between $b\in B^i_j$ and $a\in A^i$ are switched from those in $G_1$. That is, $\tilde{X}$ is the matrix $X$ where the values of the rows of each $X^i$ are switched. 

Consider our proposed construction of $G_2$. For any $b \in B^i_j$ that is adjacent to half of the vertices in $A^i$ for the graph $G_1$, denote the half of $A^i$ that is adjacent to $b$ as $A^i_j$. Then, in $G_2$, we switch these edges so each $b$ is now adjacent to $A^i \backslash A^i_j$. Now, $b$ is distance 2 away from $a \in A^i_j$ by Proposition \ref{prop: n/2 pigeon hold}.

Consider the case where $b\in B_j^i$ is not adjacent to any $a \in A^i$ in $G_1$. Vertex $b$ will also not be adjacent to any $a \in A^i$ in $G_2$. As established, there were previously only two possible distances between any $b \in B_j^i$ and any $a \in A^i$: the vertices of $B_j^i$ that were previously distance $c$ to vertices of $A_j^i$ were also distance $d$ to the vertices of $A^i \backslash A^i_j$. In $G_2$, after the switching, the vertices of $B_j^i$ are now distance $d$ to the vertices of $A_j^i$ and distance $c$ to the vertices of $A^i \backslash A^i_j$. 

The distance between $a_i \in A^i$ and $a_j \in A^j$ will stay the same, as we are only switching the distances between each $B_j^i$ and $A^i$, which does not affect the distances between $a_i$ and $a_j$. This implies that the distance from any $B_j^i$ to $A^k$ where $i\neq k$ stays the same in $G_2$, as the distance from any $b \in B_j^i$ to the closest $a \in A^i$ is the same and the distances from each $A^i$ to $A^j$ also stay the same. It follows that the distance from any $B_j^i$ to $B_l^k$ will also remain the same.

Therefore, the distance matrix of $G_2$ is 
\begin{align*}
   \D^{G_2}=\left[ \begin{array}{c|c}
      \D^{B}  &  \tilde{X}\\ 
      \midrule
      \tilde{X}^{T}  & \D^{A} \\
   \end{array} \right]
\end{align*}
where  $\tilde{X}$ 
represents the distances between each $A^i \in A$ and each $B^i_j \in B$. 

Consider

\begin{align*}
   S \left[ \begin{array}{c|c}
      \D^{B}  & X \\
      \midrule
      X^T  & \D^{A} \\
   \end{array} \right] S &=\left[\begin{array}{ccccc}
        \D^B & X^1\hat{S} & X^2\hat{S} & \dots & X^m\hat{S} \\
        \hat{S}(X^1)^T & \hat{S}\mathcal{A}^1\hat{S} & \hat{S}\mathcal{A}^{1,2}\hat{S} & \dots & \hat{S}\mathcal{A}^{1,m}\hat{S} \\
        \hat{S}(X^2)^T & \hat{S}\mathcal{A}^{2,1}\hat{S} & \hat{S}\mathcal{A}^2\hat{S} & \dots & \hat{S}\mathcal{A}^{2,m}\hat{S}\\
        \vdots &\vdots & \vdots &\ddots & \vdots\\
        \hat{S}(X^m)^T & \hat{S}\mathcal{A}^{m,1}\hat{S} & \hat{S}\mathcal{A}^{2,m}\hat{S} & \dots & \hat{S}\mathcal{A}^m\hat{S}
    \end{array}\right]
\end{align*} 
Since each row of $X^i$ has at most two possible values, by Lemma \ref{lem:s_hat_columnswitching}, multiplying $X^i\hat{S}$ results in the switching of the values within a row of $X^i$. For example, if a row of $X^i$ contains $b$ and $a$, the $a$'s will switch to $b$'s and the $b$'s switch to $a$'s. This will result in $\tilde{X}$ by construction.
So we have shown,  \begin{align*}
\tilde{X} =
    \left[\begin{array}{cccc}
        X^1\hat{S}  & X^2\hat{S}  & \dots & X^m\hat{S} 
    \end{array}\right].
\end{align*}

Similarly, \begin{align*}
\tilde{X}^T =
    \left[\begin{array}{cccc}
        \hat{S}(X^1)^T  & \hat{S}(X^2)^T  & \dots & \hat{S}(X^m)^T
    \end{array}\right]^T.
\end{align*}

We will now examine the result of the matrix multiplication on the block $\D^A$. Consider the block $\mathcal{A}^i$. Since $A^i$ is regular, we know that every row and column of $\mathcal{A}^i$ will have the same number of ones. Then, Proposition \ref{prop: n/2 pigeon hold} tells us the rest of the off-diagonal entries will be two. Thus, $\mathcal{A}^i$ has constant row and column sums. So, by Lemma \ref{lem:simplifyshatmult}, we know $\hat{S}\mathcal{A}^i\hat{S} = \mathcal{A}^i$. Now consider the block $\mathcal{A}^{i,k}$. By construction, each off-diagonal entry of $\mathcal{A}^{i,k}$ is the same. Thus, $\mathcal{A}^{i,k}$ also has constant row and column sums, and Lemma \ref{lem:simplifyshatmult} tells us $\hat{S}\mathcal{A}^{i,k}\hat{S} = \mathcal{A}^{i,k}$. We have now shown that
\begin{align*}
    \left[\begin{array}{cccc}
        \hat{S}\mathcal{A}^1\hat{S} & \hat{S}\mathcal{A}^{1,2}\hat{S} & \dots & \hat{S}\mathcal{A}^{1,m}\hat{S} \\
        \hat{S}\mathcal{A}^{2,1}\hat{S} & \hat{S}\mathcal{A}^2\hat{S} & \dots & \hat{S}\mathcal{A}^{2,m}\hat{S}\\
        \vdots & \vdots &\ddots & \vdots\\
        \hat{S}\mathcal{A}^{m,1}\hat{S} & \hat{S}\mathcal{A}^{2,m}\hat{S} & \dots & \hat{S}\mathcal{A}^m\hat{S}
    \end{array}\right] =\left[\begin{array}{cccc}
        \mathcal{A}^1 & \mathcal{A}^{1,2} & \dots & \mathcal{A}^{1,m} \\
        \mathcal{A}^{2,1} & \mathcal{A}^2 & \dots & \mathcal{A}^{2,m}\\
        \vdots & \vdots &\ddots & \vdots\\
        \mathcal{A}^{m,1} & \mathcal{A}^{2,m} & \dots & \mathcal{A}^m 
    \end{array}\right] = \D^A.
\end{align*}

\medspace{}

Therefore, 
\begin{align*}
    S \left[ \begin{array}{c|c}
      \D^{B}  & X \\
      \midrule
      X^T  & \D^{A} \\
   \end{array} \right] S=\left[ \begin{array}{c|c}
      \D^{B}  &  \tilde{X}\\ 
      \midrule
      \tilde{X}^{T}  & \D^{A} \\
   \end{array} \right].
\end{align*}

So, $S\D^{G_1}S^{-1} = \D^{G_2}$, and $S$ is a similarity matrix for cospectral pair $G_1$ and $G_2$. 
\end{proof}

We will now add several extensions to our construction. We show that we can add edges between vertices $b \in B_i$ and we extend the construction to the generalized distance matrix. We can also apply coalescing from \cite{coalescing} to our construction.

\subsection{Extensions of Main Construction}


Our first extension allows for edges to be added between vertices. This was not included in the initial construction for ease of proof. 

\begin{cor}
\label{cor:addadjacencies}
    For every pair of vertices $b_1\in B_j^i$ and $b_2 \in B_k^i$, where both $b_1,b_2$ are adjacent to at least one vertex in $A^i$. Then $G_1+\{b_1,b_2\}$ is cospectral to $G_2+\{b_1,b_2\}$. 
\end{cor}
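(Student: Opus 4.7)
My plan is to reuse the block-diagonal similarity matrix $S = I_{|B|} \oplus \hat{S}_{|A^1|} \oplus \cdots \oplus \hat{S}_{|A^m|}$ from the proof of Theorem~\ref{thm:newgeneralizedconstruction}. Writing $\D^{G_i + \{b_1,b_2\}} = \D^{G_i} + \Delta_i$ for $i \in \{1,2\}$ and using that $S \D^{G_1} S = \D^{G_2}$ is already established, the task reduces to verifying $S \Delta_1 S = \Delta_2$. Because $S$ restricts to the identity on the rows and columns indexed by $B$, this in turn reduces to two claims: (i) both $\Delta_1$ and $\Delta_2$ are supported entirely on the $B$-$B$ block; and (ii) $\Delta_1$ and $\Delta_2$ agree on that block. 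Granting these, one immediately obtains $S\Delta_1 S = \Delta_1 = \Delta_2$, which combined with the conclusion of Theorem~\ref{thm:newgeneralizedconstruction} gives $S \D^{G_1+\{b_1,b_2\}} S = \D^{G_2+\{b_1,b_2\}}$.

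For (i), I would argue that adding the edge $\{b_1, b_2\}$ cannot shorten any distance $\dist(u,v)$ when at least one of $u,v$ lies in $A$. A potential shortcut using the new edge is a path $u \to \cdots \to b_1 \to b_2 \to \cdots \to v$ of length $\dist(u,b_1) + 1 + \dist(b_2,v)$. When $u, v \in A^i$, the fact that $b_1, b_2 \notin A$ forces this to be at least $3$, exceeding the within-$A^i$ diameter bound of $2$ from Proposition~\ref{prop: n/2 pigeon hold}. For $u \in A^i$ and $v \in A^k$ with $k\neq i$, the shortcut is bounded below by $3 + \dist(A^i, A^k)$, while by the uniformity argument in the proof of Theorem~\ref{thm:newgeneralizedconstruction} the original distance is exactly $\dist(A^i, A^k)$, so again there is no improvement. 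Similar bookkeeping handles the case $u \in B$, $v \in A$, using that both $b_1$ and $b_2$ attach to $A^i$, so that a candidate shortcut through the new edge incurs the same (or longer) detours as the original shortest path.

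For (ii), Theorem~\ref{thm:newgeneralizedconstruction} already yields $\dist_{G_1}(b,b') = \dist_{G_2}(b,b')$ for all $b, b' \in B$. Any new shortest path in $G_i + \{b_1, b_2\}$ that uses the added edge has length $\dist_{G_i}(b, b_1) + 1 + \dist_{G_i}(b_2, b')$, which is independent of $i$; consequently the updated distance $\min\{\dist_{G_i}(b,b'), \dist_{G_i}(b, b_1) + 1 + \dist_{G_i}(b_2, b')\}$ is the same for $i=1$ and $i=2$, and $\Delta_1$ and $\Delta_2$ coincide on the $B$-$B$ block. The main obstacle is the case analysis inside (i): the subtlest subcase is $u \in A^i$, $v \in A^k$ when the original $A^i$-to-$A^k$ shortest route is long, and the critical leverage is the hypothesis that both $b_1$ and $b_2$ attach to the same $A^i$, so that any shortcut through $\{b_1, b_2\}$ must still traverse an $A^i$-to-$A^k$ route and therefore cannot save edges.
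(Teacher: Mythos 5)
Your proof is correct and follows essentially the same route as the paper: reuse the similarity matrix $S$ from Theorem~\ref{thm:newgeneralizedconstruction} and observe that the added edge perturbs only the $B$--$B$ block, where $S$ acts as the identity, while leaving all distances to $A$ unchanged. The paper's own proof is a two-sentence version of exactly this argument; your $\Delta_i$ decomposition and case analysis simply make it explicit.
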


\begin{proof}
    Adding edges between pairs of vertices $b \in B^i$ that are both neighbors to a vertex $a \in A^i$ will keep the distances to $A$ the same, as both $b$ are already joined to $A^i$. Therefore, the algebraic process used in the proof of Theorem \ref{thm:newgeneralizedconstruction} applies. 
\end{proof}

In addition to adding edges between $b \in B_i$, we can also extend our construction to the generalized distance matrix. Since the functionality of the matrix switching in \ref{thm:newgeneralizedconstruction} does not rely on the actual values of the distance function, we are able to perform the same operations using the generalized distance matrix. Recall that the generalized distance matrix, $\Df$, is the matrix where $(\Df)_{i,j} = f(\dist(i,j))$ for any function $f$. So, our construction applies to any function of distance. 

\begin{cor}
\label{arbdistswitchingcor}
    $G_1$ and $G_2$ are also cospectral for the generalized distance matrix.
\end{cor}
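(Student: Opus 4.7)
The plan is to re-run the proof of Theorem~\ref{thm:newgeneralizedconstruction} verbatim, but with the same similarity matrix
\[
S = I_{|B|} \oplus \hat{S}_{|A^1|} \oplus \hat{S}_{|A^2|} \oplus \cdots \oplus \hat{S}_{|A^m|}
\]
acting on $\Df^{G_1}$ instead of $\D^{G_1}$. Partition $\Df^{G_1}$ into the same block form with a $\Df^B$ corner, an $A$-corner $\Df^A$ made of blocks $f(\mathcal{A}^i)$ and $f(\mathcal{A}^{i,k})$, and off-diagonal pieces $f(X^i)$, where $f$ is applied entrywise. The goal is to show $S\,\Df^{G_1}\,S = \Df^{G_2}$, exactly mirroring what was done for $\D$.

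The key point is that the two technical lemmas used in the proof of Theorem~\ref{thm:newgeneralizedconstruction} depend only on combinatorial features of the blocks, not on the particular numerical values of the distances. First, I would observe that each row of $X^i$ had at most two distance values appearing in equal numbers; applying $f$ entrywise preserves that structure, so each row of $f(X^i)$ still has at most two values appearing equally often, and Lemma~\ref{lem:s_hat_columnswitching} still produces the swap $f(X^i)\hat{S} = \widetilde{f(X^i)}$. Moreover, this swap is precisely what $G_2$'s distance pattern produces after applying $f$, by the same case analysis (1)--(2) used previously: the two distance values $c,d$ appearing in a row of $X^i$ are exchanged to $d,c$ in $G_2$, so $f(c), f(d)$ are likewise exchanged.

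Second, for the diagonal $A$-blocks, Theorem~\ref{thm:newgeneralizedconstruction} showed each $\mathcal{A}^i$ has rows consisting of one $0$, exactly $d_i$ ones, and $|A^i|-1-d_i$ twos (by Proposition~\ref{prop: n/2 pigeon hold} and regularity), and each $\mathcal{A}^{i,k}$ has all off-diagonal entries equal. After applying $f$, each row of $f(\mathcal{A}^i)$ is a permutation of the same multiset $\{f(0), d_i\cdot f(1), (|A^i|-1-d_i)\cdot f(2)\}$, so $f(\mathcal{A}^i)$ still has constant row and column sums; similarly $f(\mathcal{A}^{i,k})$ has constant row and column sums. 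Hence Lemma~\ref{lem:simplifyshatmult} applies and $\hat{S} f(\mathcal{A}^i) \hat{S} = f(\mathcal{A}^i)$ and $\hat{S} f(\mathcal{A}^{i,k}) \hat{S} = f(\mathcal{A}^{i,k})$. The $\Df^B$ block is unchanged since it is conjugated by the identity block of $S$.

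Assembling the blocks gives $S\,\Df^{G_1}\,S = \Df^{G_2}$, and since $S^2 = I$ (as $\hat{S}^2 = I$), this yields the desired similarity. I expect no genuine obstacle: the only thing that could go wrong is if some property used in Theorem~\ref{thm:newgeneralizedconstruction} implicitly relied on the numerical value of a distance rather than on the multiset structure of rows, and the mild check above confirms it does not. Thus the proof reduces to the observation that the entire argument passes through $f$ because the structural invariants (two-value rows with equal multiplicity, constant row/column sums) are preserved under entrywise application of any function.
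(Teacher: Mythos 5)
Your proposal is correct and follows essentially the same route as the paper, which likewise reuses the similarity matrix $S$ from Theorem~\ref{thm:newgeneralizedconstruction} and notes that replacing each $\dist(b,a)$ with $f(\dist(b,a))$ leaves the matrix algebra unchanged. Your version simply makes explicit the check the paper leaves implicit, namely that the structural invariants (two-value rows with equal multiplicity, constant row and column sums) survive entrywise application of $f$.
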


\begin{proof}
    We use the same process as used for Theorem \ref{thm:newgeneralizedconstruction}, 
    but we replace each $\dist(b,a)$ with $f(\dist(b,a))$ for an arbitrary function $f$. Even with this change, the matrix algebra of the proof remains the same. Thus, we are still switching the entries in blocks $X$ and $X^T$. Therefore, $D_f^{G_1}$ and $D_f^{G_2}$ are similar for the same $S$, so $G_1$ and $G_2$ are cospectral for the generalized distance matrix.
\end{proof}

In expanding our construction to the generalized distance matrix, we significantly expand its scope. In Section \ref{sec:data}, we present the number of cospectral graphs, on small numbers of vertices, generated by our construction. However, we can also apply coalescing from Bin Mahamud et. al in \cite{coalescing} to the construction. The authors define \emph{coalescing}  as the following: 
\begin{defn}[\cite{coalescing}]
    Given graphs $G$ with vertex $v$ and a rooted graph $H$, the \emph{coalescing of $H$ onto $v$}, denoted $G\coal_vH$, is the graph formed by taking the disjoint union of the graphs $G$ and $H$ and then identifying the root of $H$ and $v$ as the same vertex. 
\end{defn}

Now, using a result from Bin Mahamud et. al in \cite{coalescing}, we show that we can coalesce connected graphs $C$ onto vertices of $A$.

\begin{cor}
Let $C$ be a connected graph and choose a vertex of $C$, denoted $c$. Let $H_1$ and $H_2$ be the graphs with copies of $C$ coalesced to all vertices of $A^i$ for some $1 \leq i \leq m$ of $G_1$ and $G_2$ respectively at vertex $c$. Then $H_1$ and $H_2$ are cospectral for the generalized distance matrix.
\label{cor:coalescing}
\end{cor}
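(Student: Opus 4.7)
The plan is to deduce this corollary by invoking the coalescing preservation result of Bin Mahamud et al.~\cite{coalescing}, together with our Corollary~\ref{arbdistswitchingcor}. By that corollary, $G_1$ and $G_2$ are already cospectral for $\Df$ via the similarity matrix $S = I_{|B|} \oplus \hat{S}_{|A^1|} \oplus \cdots \oplus \hat{S}_{|A^m|}$. The coalescing theorem of \cite{coalescing} asserts, roughly, that if one coalesces the same rooted graph onto corresponding vertices in a cospectral pair, under an appropriate equivalence condition on those vertices, the resulting graphs remain cospectral for the generalized distance matrix. So the task reduces to verifying that $A^i$ meets this condition in both $G_1$ and $G_2$.

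First, I would check that the vertices of $A^i$ are mutually interchangeable from the perspective of the rest of $G_1$ (and $G_2$) in the sense witnessed by $\hat{S}_{|A^i|}$. This uses facts already established in the proof of Theorem~\ref{thm:newgeneralizedconstruction}: the subgraph induced on $A^i$ is regular, so by Proposition~\ref{prop: n/2 pigeon hold} the off-diagonal distances within $A^i$ are all $1$ or $2$ with constant row and column sums; each $a \in A^i$ has a uniform all-or-nothing adjacency pattern to every other $A^k$ with $k \neq i$; and each $b \in B^i_j$ is at distance $1$ or $2$ from $A^i$ with the two values appearing in equal numbers. These facts give $A^i$ the orbit-like structure that makes coalescing compatible with the $\hat{S}$-switching.

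Next, I would track the effect of coalescing on the distance matrix. For every non-root position $c' \in V(C) \setminus \{c\}$, coalescing introduces $|A^i|$ new vertices $\{v_{a,c'} : a \in A^i\}$, and for any vertex $w$ outside the attached copies of $C$ we have $\dist(v_{a,c'}, w) = \dist_C(c', c) + \dist(a, w)$. Thus the distance profiles of the new vertices toward $B$ and toward $A^k$ ($k \neq i$) inherit exactly the two-value row structure that drove the proof of Theorem~\ref{thm:newgeneralizedconstruction}. A concrete realization is to extend $S$ to $S' = S \oplus \bigoplus_{c' \in V(C) \setminus \{c\}} \hat{S}_{|A^i|}$, where each extra block acts on the $|A^i|$ copies of position $c'$ in the coalesced graphs, and then verify $S' \Df^{H_1} S' = \Df^{H_2}$ block by block using Lemmas~\ref{lem:s_hat_columnswitching} and~\ref{lem:simplifyshatmult}.

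The main obstacle is pinning down the precise hypotheses in the coalescing theorem of \cite{coalescing} and matching them to our setting; once done, the result follows by direct invocation or, alternatively, by the explicit extended similarity argument sketched above. The only new bookkeeping beyond Theorem~\ref{thm:newgeneralizedconstruction} is handling the mutual distances among the coalesced copies themselves, which again reduce (via the attachment vertices in $A^i$) to blocks that either have constant row and column sums, invoking Lemma~\ref{lem:simplifyshatmult}, or exhibit the two-value row pattern needed by Lemma~\ref{lem:s_hat_columnswitching}.
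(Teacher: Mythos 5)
Your proposal matches the paper's proof, which is a one-line appeal to Theorem 3.3 of \cite{coalescing}: since Theorem \ref{thm:newgeneralizedconstruction} already furnishes the block similarity matrix $S$ for the generalized distance matrix, coalescing copies of $C$ onto all of $A^i$ preserves cospectrality. Your extra sketch of the extended similarity matrix $S' = S \oplus \bigoplus_{c'} \hat{S}_{|A^i|}$ and the distance bookkeeping is more detail than the paper provides, but the core argument---reducing to the coalescing result of \cite{coalescing} via Corollary \ref{arbdistswitchingcor}---is the same.
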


\begin{proof}
    Since Theorem \ref{thm:newgeneralizedconstruction} holds for the generalized distance function with the similarity matrix $S$, it follows that we can coalesce copies of $C$ onto every vertex in $A^i$ as stated in Theorem 3.3 of \cite{coalescing}.
\end{proof}

\subsection{Examples}

In this section, we present illustrative examples of cospectral pairs to highlight our main construction and its corollaries.

Our first example highlights Theorem \ref{thm:newgeneralizedconstruction} shown in Figure \ref{fig:small-example}. Let $A^1=\{4,5,6,7\}$ and $B_1^1=\{1\}, B_2^1=\{2\}, B_3^1=\{3\}$. Notice that the vertices of $A$ induce a $2$-regular graph and each $B_i^1$ is adjacent to half the vertices of $A^1$. Further, by switching the adjacencies in $G_1$, (Figure \ref{fig:small-example}a) we obtain $G_2$ (Figure \ref{fig:small-example}b). These graphs are cospectral for the generalized distance matrix and can use the similarity matrix $S$ that we describe in Theorem \ref{thm:newgeneralizedconstruction}. 
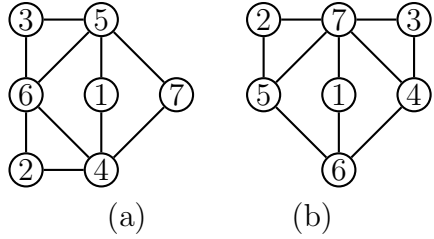
\begin{figure}[h]
    \centering
   \begin{tikzpicture}[scale = 1]

\node[Vertex] (a) at ({2},1) {$7$};
\node[Vertex] (b) at ({0},0) {$2$};
\node[Vertex] (c) at ({0},1) {$6$};
\node[Vertex] (d) at (1,0) {$4$};
\node[Vertex] (e) at (1,2) {$5$};
\node[Vertex] (f) at (1,1) {$1$};
\node[Vertex] (g) at ({0},2) {$3$};

\draw[Edge] (c)--(d)--(f)--(e)--(c)--(b)--(d)--(a);
\draw[Edge] (c)--(g)--(e);
\draw[Edge] (a)--(e);

\end{tikzpicture} \hspace{1em} 
\begin{tikzpicture}[scale = 1]

\node[Vertex] (a) at (1,1) {$1$};
\node[Vertex] (b) at ({2},2) {$3$};
\node[Vertex] (c) at ({2},1) {$4$};
\node[Vertex] (d) at (1,2) {$7$};
\node[Vertex] (e) at (1,0) {$6$};
\node[Vertex] (f) at ({0},1) {$5$};
\node[Vertex] (g) at ({0},2) {$2$};

\draw[Edge] (c)--(d)--(f)--(e)--(c)--(b)--(d)--(a) (d)--(g)--(f)  (a)--(e);

\end{tikzpicture} \\
(a) \hspace{4em} (b)
    \caption{Two graphs (a) $G_1$ and (b) $G_2$ that are cospectral for the generalized distance matrix. Let $A^1=\{4,5,6,7\}$ and $B_1^1=\{1\}, B_2^1=\{2\}, B_3^1=\{3\}$. By switching the adjacencies between $B_i^1$ and $A^1$ we obtain $G_2$ from $G_1$.}
    \label{fig:small-example}
\end{figure}

Another example of our construction, with multiple connected regular graphs $A^1, A^2,$ and $A^3$, is shown in Figure \ref{fig:cospecmultipleAi}. Both of these graphs have diameter $5$. Adjacencies between each $A^i$ are shown in gray. Every $a \in A^1$ is adjacent to every vertex in $A^2$ and no vertex in $A^3$. Every $a \in A^3$ is adjacent to every vertex in $A^2$ and no vertex in $A^1$.

The adjacencies from $b \in B^i$ to half of the vertices of the corresponding $A^i$ are indicated by colored lines. With the addition of the edges between vertices in $B_i^2$, this partition satisfies the conditions of Corollary \ref{cor:addadjacencies}. These adjacencies in $G_1$ (Figure \ref{fig:cospecmultipleAi}a) switch to the other half of the vertices in each $A^i$ in $G_2$ (Figure \ref{fig:cospecmultipleAi}b).

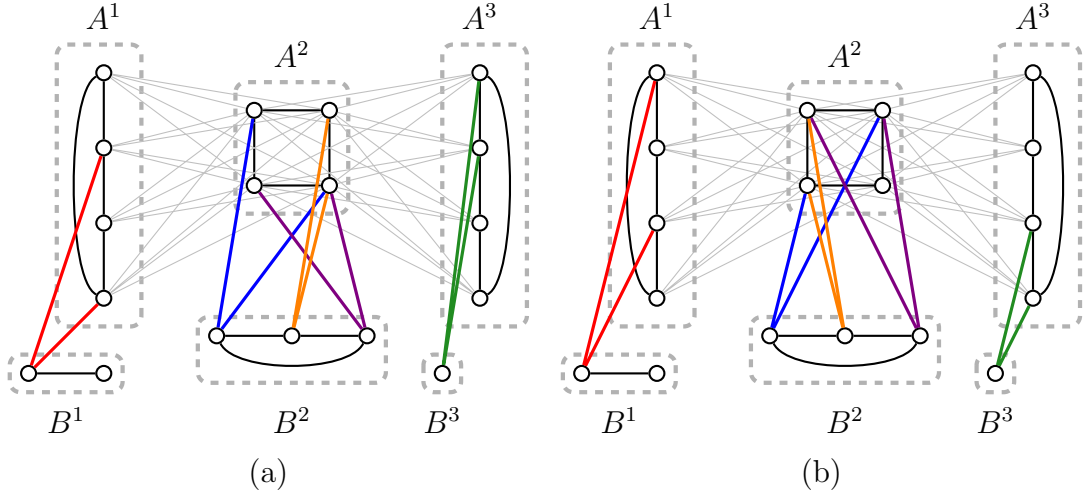
\begin{figure}[h]
    \centering
    \begin{tabular}{cc}
       \begin{tikzpicture}[scale=0.5]
           
            \draw[rounded corners, color=black!30!white,ultra thick, dashed] (4,-3.75) rectangle (6.25,3.75)
            (-1.5, 2.75) rectangle (1.5,-0.75)
            
            (-4, -3.75) rectangle (-6.25, 3.75) (-4.5, -4.5) rectangle (-7.5, -5.5)
            (-2.5, -3.5) rectangle (2.5,-5.25)  
             (3.5, -4.5) rectangle (4.5, -5.5);
            
             \node (a1) at (-5, 4.5) {$A^1$};
             \node (a2) at (0, 3.5) {$A^2$};
             \node (a3) at (5, 4.5) {$A^3$};
             \node (b1) at (-6, -6.25) {$B^1$};
             \node (b2) at (0, -6.25) {$B^2$};
             \node (b3) at (4, -6.25) {$B^3$};

            \node[vertex] (0) at (5,3) {};
            \node[vertex] (1) at (5,1) {};
            \node[vertex] (2) at (5,-1) {};
            \node[vertex] (3) at (5,-3) {};

            \node[vertex] (4) at (-1,2) {};
            \node[vertex] (5) at (-1, 0) {};
            \node[vertex] (6) at (1, 0) {};
            \node[vertex] (7) at (1, 2) {};

            \node[vertex] (8) at (-5,3) {};
            \node[vertex] (9) at (-5,1) {};
            \node[vertex] (10) at (-5,-1) {};
            \node[vertex] (11) at (-5,-3) {};
            
            \draw[thick] (3)--(2)--(1)--(0);
            \draw[thick] (3) .. controls (6, -2.5) and (6,2.5) .. (0);
            \draw[thick] (4)--(5)--(6)--(7)--(4);
            \draw[thick] (8)--(9)--(10)--(11);
           \draw[thick] (11) .. controls (-6, -2.5) and (-6, 2.5) .. (8);

           \node[vertex] (12) at (-5, -5) {};
           \node[vertex] (13) at (-7, -5) {};
           \draw[thick] (12)--(13);

           
           \node[vertex] (15) at (0, -4) {};
           \node[vertex] (14) at (-2, -4) {};
           \node[vertex] (16) at (2, -4) {};
           \draw[thick] (14)--(15)--(16);
           \draw[thick] (14) .. controls (-1.5, -5) and (1.5, -5) .. (16);

           \node[vertex] (17) at (4, -5) {};

           \draw[lightgray] (0)--(4) (0)--(5) (0)--(6) (0)--(7);
           \draw[lightgray] (1)--(4) (1)--(5) (1)--(6) (1)--(7);
           \draw[lightgray] (2)--(4) (2)--(5) (2)--(6) (2)--(7);
           \draw[lightgray] (3)--(4) (3)--(5) (3)--(6) (3)--(7);

           \draw[lightgray] (8)--(4) (8)--(5) (8)--(6) (8)--(7);
           \draw[lightgray] (9)--(4) (9)--(5) (9)--(6) (9)--(7);
           \draw[lightgray] (10)--(4) (10)--(5) (10)--(6) (10)--(7);
           \draw[lightgray] (11)--(4) (11)--(5) (11)--(6) (11)--(7);
           
           \draw[thick, red, line width=1.2pt] (13)--(11) (13)--(9);

           \draw[thick, forestgreen, line width=1.2pt] (17)--(0) (17)--(1);

           \draw[thick, blue, line width = 1.2pt] (14)--(4) (14)--(6);
           \draw[thick, violet, line width=1.2 pt] (16)--(5) (16)--(6); 
           \draw[thick, orange, line width=1.2pt] (15)--(6) (15)--(7);       
           
        \end{tikzpicture}  &      
        \begin{tikzpicture}[scale=0.5]
           \draw[lightgray] (0)--(4) (0)--(5) (0)--(6) (0)--(7);
           \draw[lightgray] (1)--(4) (1)--(5) (1)--(6) (1)--(7);
           \draw[lightgray] (2)--(4) (2)--(5) (2)--(6) (2)--(7);
           \draw[lightgray] (3)--(4) (3)--(5) (3)--(6) (3)--(7);

           \draw[lightgray] (8)--(4) (8)--(5) (8)--(6) (8)--(7);
           \draw[lightgray] (9)--(4) (9)--(5) (9)--(6) (9)--(7);
           \draw[lightgray] (10)--(4) (10)--(5) (10)--(6) (10)--(7);
           \draw[lightgray] (11)--(4) (11)--(5) (11)--(6) (11)--(7);
            \draw[rounded corners, color=black!30!white,ultra thick, dashed] (4,-3.75) rectangle (6.25,3.75)
            (-1.5, 2.75) rectangle (1.5,-0.75)
            (-2.5, -3.5) rectangle (2.5,-5.25)  
             (3.5, -4.5) rectangle (4.5, -5.5)
            
            (-4, -3.75) rectangle (-6.25, 3.75) (-4.5, -4.5) rectangle (-7.5, -5.5);
             \node (a1) at (-5, 4.5) {$A^1$};
             \node (a2) at (0, 3.5) {$A^2$};
             \node (a3) at (5, 4.5) {$A^3$};
             \node (b1) at (-6, -6.25) {$B^1$};
             \node (b2) at (0, -6.25) {$B^2$};
             \node (b3) at (4, -6.25) {$B^3$};

            \node[vertex] (0) at (5,3) {};
            \node[vertex] (1) at (5,1) {};
            \node[vertex] (2) at (5,-1) {};
            \node[vertex] (3) at (5,-3) {};

            \node[vertex] (4) at (-1,2) {};
            \node[vertex] (5) at (-1, 0) {};
            \node[vertex] (6) at (1, 0) {};
            \node[vertex] (7) at (1, 2) {};

            \node[vertex] (8) at (-5,3) {};
            \node[vertex] (9) at (-5,1) {};
            \node[vertex] (10) at (-5,-1) {};
            \node[vertex] (11) at (-5,-3) {};
            
            \draw[thick] (3)--(2)--(1)--(0);
            \draw[thick] (3) .. controls (6, -2.5) and (6,2.5) .. (0);
            \draw[thick] (4)--(5)--(6)--(7)--(4);
            \draw[thick] (8)--(9)--(10)--(11);
           \draw[thick] (11) .. controls (-6, -2.5) and (-6, 2.5) .. (8);

           \node[vertex] (12) at (-5, -5) {};
           \node[vertex] (13) at (-7, -5) {};
           \draw[thick] (12)--(13);

           
           \node[vertex] (15) at (0, -4) {};
           \node[vertex] (14) at (-2, -4) {};
           \node[vertex] (16) at (2, -4) {};
           \draw[thick] (14)--(15)--(16);
           \draw[thick] (14) .. controls (-1.5, -5) and (1.5, -5) .. (16);

           \node[vertex] (17) at (4, -5) {};


           \draw[thick, red, line width=1.2pt] (13)--(10) (13)--(8);

           \draw[thick, forestgreen, line width=1.2pt] (17)--(2) (17)--(3);

           \draw[thick, blue, line width = 1.2pt] (14)--(5) (14)--(7);
           \draw[thick, violet, line width=1.2 pt] (16)--(4) (16)--(7); 
           \draw[thick, orange, line width=1.2pt] (15)--(4) (15)--(5);

        \end{tikzpicture}\\
        (a) & (b) \end{tabular}
      
    \caption{Two graphs (a) $G_1$ and (b) $G_2$ that are cospectral for the generalized distance matrix with $\diam(G_1)=\diam(G_2)=5$. The vertex partition noted on the graph satisfies the partition in Theorem \ref{thm:newgeneralizedconstruction} and Corollary \ref{cor:addadjacencies} with the edges added between vertices from $B_i^2$.}
    \label{fig:cospecmultipleAi}
\end{figure}

\begin{figure}[h]
    \centering
    \setlength{\tabcolsep}{24pt}
    \begin{tabular}{cc}
        \begin{tikzpicture}[scale=0.5]
            \draw[rounded corners, color=black!30!white,ultra thick, dashed] (2.5,-3.75) rectangle (4.25,3.75)
            (-4.75, .25) rectangle (.75,3.75)
            (-2.75, -4.75) rectangle (.75,-3.25)
            (-1.75, -2.25) rectangle (-.25, -.75);

            \node (f1) at (-5.5, 2) {$B^1$};
            \node (f3) at (-2.5, -1.5) {$B^2$};
            \node (f2) at (-3.5, -4) {$B^3$};
            \node (a) at (3.25, -4.25) {$A$};
            
            \node[vertex] (0) at (3,3) {};
            \node[vertex] (1) at (3,1) {};
            \node[vertex] (2) at (3,-1) {};
            \node[vertex] (3) at (3,-3) {};

            \node[vertex] (4) at (0, 2) {};
            \node[vertex] (5) at (0, -4) {};

            \node[vertex] (6) at (-2, 2) {};
            \node[vertex] (7) at (-2, -4) {};
            \node[vertex] (18) at (-1, -1.5) {};

            \node[vertex] (8) at (-4, 3) {};
            \node[vertex] (9) at (-4, 1) {};

            \node[vertex] (10) at (5, 3.5) {};
            \node[vertex] (11) at (5,2.5) {};
            \node[vertex] (12) at (5, 1.5) {};
            \node[vertex] (13) at (5,0.5) {};
            \node[vertex] (14) at (5, -0.5) {};
            \node[vertex] (15) at (5,-1.5) {};
            \node[vertex] (16) at (5, -2.5) {};
            \node[vertex] (17) at (5,-3.5) {};

            \draw[thick] (0)--(1)--(2)--(3);
            \draw[thick] (3) .. controls (4, -2.5) and (4,2.5) .. (0);
            \draw[thick, blue, line width=1.2pt] (6)--(0) (6)--(2);
            \draw[thick] (4)--(0) (4)--(1) (4)--(2) (4)--(3);
            \draw[thick] (6)--(8)--(9)--(6)--(4);
            \draw[thick, red, line width=1.2pt] (5)--(1) (5)--(2) (7)--(1) (7)--(2);
            \draw[thick] (5)--(7);
            \draw[thick] (10)--(11)--(0)--(10) (12)--(13)--(1)--(12) (14)--(15)--(2)--(14) (16)--(17)--(3)--(16);
            \draw[thick, forestgreen, line width=1.2pt] (0)--(18)--(1);
        \end{tikzpicture}
        & \begin{tikzpicture}[scale=0.5]
            \draw[rounded corners, color=black!30!white,ultra thick, dashed] (2.5,-3.75) rectangle (4.25,3.75)
            (-4.75, .25) rectangle (.75,3.75)
            (-2.75, -4.75) rectangle (.75,-3.25)
            (-1.75, -2.25) rectangle (-.25, -.75);

            \node (f1) at (-5.5, 2) {$B^1$};
            \node (f3) at (-2.5, -1.5) {$B^2$};
            \node (f2) at (-3.5, -4) {$B^3$};
            \node (a) at (3.25, -4.25) {$A$};
            
            \node[vertex] (0) at (3,3) {};
            \node[vertex] (1) at (3,1) {};
            \node[vertex] (2) at (3,-1) {};
            \node[vertex] (3) at (3,-3) {};

            \node[vertex] (4) at (0, 2) {};
            \node[vertex] (5) at (0, -4) {};

            \node[vertex] (6) at (-2, 2) {};
            \node[vertex] (7) at (-2, -4) {};
            \node[vertex] (18) at (-1, -1.5) {};

            \node[vertex] (8) at (-4, 3) {};
            \node[vertex] (9) at (-4, 1) {};

            \node[vertex] (10) at (5, 3.5) {};
            \node[vertex] (11) at (5,2.5) {};
            \node[vertex] (12) at (5, 1.5) {};
            \node[vertex] (13) at (5,0.5) {};
            \node[vertex] (14) at (5, -0.5) {};
            \node[vertex] (15) at (5,-1.5) {};
            \node[vertex] (16) at (5, -2.5) {};
            \node[vertex] (17) at (5,-3.5) {};

            \draw[thick] (0)--(1)--(2)--(3);
            \draw[thick] (3) .. controls (4, -2.5) and (4,2.5) .. (0);
            \draw[thick, blue, line width=1.2pt] (6)--(1) (6)--(3);
            \draw[thick] (4)--(0) (4)--(1) (4)--(2) (4)--(3);
            \draw[thick] (6)--(8)--(9)--(6)--(4);
            \draw[thick, red, line width=1.2pt] (5)--(0) (5)--(3) (7)--(0) (7)--(3);
            \draw[thick] (5)--(7);
            \draw[thick] (10)--(11)--(0)--(10) (12)--(13)--(1)--(12) (14)--(15)--(2)--(14) (16)--(17)--(3)--(16);
            \draw[thick, forestgreen, line width=1.2pt] (3)--(18)--(2);
        \end{tikzpicture} \\
        (a) & (b)
    \end{tabular}
    \caption{Two graphs (a) $G_1$ and (b) $G_2$. The partition denoted on the graph (with gray dotted lines) satisfies the conditions of Theorem \ref{thm:newgeneralizedconstruction}, and thus, the pair is cospectral for the generalized distance matrix. By Corollary \ref{cor:coalescing}, we coalescing $K_3$ to each vertex in $A$ to maintain cospectrality for the generalized distance matrix.}
    \label{fig:coalesce_construct_ex}
\end{figure}
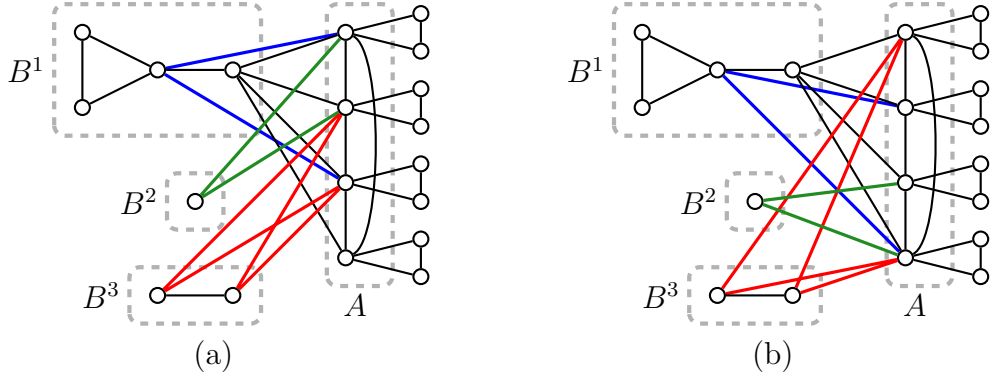

Corollary \ref{cor:coalescing} states that we can coalesce copies of $C$ onto both $G_1$ and $G_2$, retaining cospectrality. In Figure \ref{fig:coalesce_construct_ex}, we start with a $G_1$ and $G_2$ that satisfy the vertex partition of Theorem \ref{thm:newgeneralizedconstruction} and then coalesce a $C_3$ to each vertex of $A$. Therefore, these new graphs $H_1$ and $H_2$ are cospectral for the generalized distance matrix. 

Through these examples, we hope to demonstrate the breadth and scope of our construction method. In our next section, we provide some data on the prevalence of the method. 

\section{Data}
\label{sec:data}
    To evaluate the significance of our cospectral construction, we computed the total number of cospectral pairs on 7, 8, and 9 vertices for both the exponential and generalized distance matrices. Table \ref{tab:constructiondata} includes the total number of cospectral graph pairs for both matrices and identifies how many of these pairs arise from the constructions described in Theorem \ref{thm:newgeneralizedconstruction} and Corollary \ref{cor:addadjacencies}. These results demonstrate that our construction plays a substantial role in generating cospectral pairs for graphs of these sizes. See \cite{code} for code used to generate this data.

\begin{table}[ht]
\def\arraystretch{1.2}
\centering
\begin{tabular}{r|cc|cccc|}
\cline{2-7}
\multicolumn{1}{l|}{}                         & \multicolumn{2}{c|}{Total cospectral pairs} & \multicolumn{4}{c|}{Pairs following our construction}                       \\ \hline
\multicolumn{1}{|l|}{\textbf{\# of vertices}} & \textbf{$\Dq$}       & \textbf{$\Df$}       & \textbf{$\Dq$} & \textbf{$\Dq$ (\%)} & \textbf{$\Df$} & \textbf{$\Df$ (\%)} \\ \hline
\multicolumn{1}{|r|}{7} & 11    & 10    & 10  & 91\% &  10 & 100\% \\
\multicolumn{1}{|r|}{8} & 293   & 281   & 222 & 76\% &  222 & 79\%  \\
\multicolumn{1}{|r|}{9} & 12439 & 12118 & 6375 &   51\%   & 6375 &   53\%    \\ \hline
\end{tabular}
\caption{Total number of cospectral pairs and how many follow the construction presented by Theorem \ref{thm:newgeneralizedconstruction} and Corollary \ref{cor:addadjacencies}}
\label{tab:constructiondata}
\end{table}

\section{When Does Cospectrality Hold for All Values of $q$?}
\label{sec:cospecforallq}
Though we created a construction for the generalized distance matrix, the question remains: for which values of $q$ are two graphs cospectral? We found that the number of $q$ values needed to prove cospectrality for all $q$ depends on the maximum diameter of $G$ and $H$. Additionally, if a pair of graphs is cospectral for all $q$ for the same similarity matrix $S$, it will also be cospectral for the generalized distance matrix. The proof follows in Theorem \ref{thm:diamqallq}.

\begin{thm}
\label{thm:diamqallq}
Let $G$ and $H$ be graphs with connected components $G=G_1 \cup \cdots \cup G_r$ and $H=H_1 \cup \cdots \cup H_s$. Let $d= \max\{\diam(G_i), \diam(H_i)\}$ (the maximum diameter of the connected components). If $G,H$ are exponential distance cospectral for $d$ distinct non-zero values of $q$ for the same similarity matrix $S \in \mathbb{C}^{n \times n}$, then $G$ and $H$ are cospectral for the generalized distance matrix.
\end{thm}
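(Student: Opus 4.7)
The plan is to decompose each exponential distance matrix into a matrix-valued polynomial in $q$ whose coefficients are the ``distance layer'' indicator matrices. For each integer $k\geq 0$, let $M_k^G$ be the $\{0,1\}$-matrix with $(i,j)$-entry equal to $1$ exactly when $\dist(i,j)=k$ in $G$, and zero otherwise (in particular, between-component entries are zero, matching the block-diagonal convention that makes $\Dq^G$ well-defined on disconnected graphs). Because every finite distance in either $G$ or $H$ is at most $d$, this gives
\[
\Dq^G \;=\; \sum_{k=0}^{d} q^{k}\, M_k^G,\qquad \Dq^H \;=\; \sum_{k=0}^{d} q^{k}\, M_k^H,
\]
with $M_0^G = M_0^H = I_n$.

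The hypothesis provides distinct nonzero scalars $q_1,\dots,q_d$ and a single invertible matrix $S$ such that $S^{-1}\Dq^G S = \Dq^H$ at each $q_j$. Setting $N_k := S^{-1} M_k^G S - M_k^H$, this rewrites as $\sum_{k=0}^{d} q_j^{k} N_k = 0$ for every $j$. Since $N_0 = S^{-1}IS - I = 0$, for each fixed entry $(i,j)$ the scalar polynomial $p_{ij}(q) := \sum_{k=1}^{d} (N_k)_{ij}\, q^{k}$ has no constant term and vanishes at each of $q_1,\dots,q_d$. Hence $p_{ij}$ has at least $d+1$ distinct zeros (the $q_j$'s together with $0$) while its degree is at most $d$, forcing $p_{ij}\equiv 0$. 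Equating coefficients yields $(N_k)_{ij}=0$ for every $k$ and every $(i,j)$, so $S^{-1} M_k^G S = M_k^H$ for all $k=0,1,\dots,d$.

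To conclude, fix any $f\colon \mathbb{R}\to\mathbb{R}$. Under the same disconnected-graph convention, $\Df^G$ and $\Df^H$ also decompose across distance layers as $\Df^G=\sum_{k=0}^{d} f(k)\, M_k^G$ and $\Df^H=\sum_{k=0}^{d} f(k)\, M_k^H$. Linearity of conjugation then gives
\[
S^{-1}\Df^G S \;=\; \sum_{k=0}^{d} f(k)\, S^{-1} M_k^G S \;=\; \sum_{k=0}^{d} f(k)\, M_k^H \;=\; \Df^H,
\]
so $\Df^G$ and $\Df^H$ are similar, and in particular cospectral.

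The main obstacle is really a tight-counting issue in the polynomial step: the hypothesis only supplies $d$ interpolation points, but each layer polynomial can have degree as large as $d$, so the argument would fail without the automatic vanishing at $q=0$ coming from $M_0^G=M_0^H=I$. A secondary subtlety is ensuring that $\Dq$ and $\Df$ share the same convention on disconnected graphs (block-diagonal, i.e., zero between components) so that the layer decomposition is valid for both matrices and the similarity transfers \emph{verbatim} from exponential to generalized distance.
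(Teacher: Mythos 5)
Your proposal is correct and follows essentially the same route as the paper: both expand the exponential distance matrix as a polynomial in $q$ graded by distance, observe that the constant terms automatically agree (your $N_0=0$ is the paper's $x_0=y_0$, giving the extra root at $q=0$), conclude by root-counting that all coefficients vanish, and then substitute $f(k)$ for $q^k$. Your matrix-level formulation via the layer matrices $M_k$ and the explicit conclusion $S^{-1}M_k^G S = M_k^H$ is a slightly cleaner packaging of the paper's entrywise argument, but the underlying idea is identical.
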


\begin{proof}
First, we will show that a pair of graphs that is cospectral for $d$ distinct values of $q$ on the same similarity matrix will be cospectral for all values of $q$. Since $G$ and $H$ are cospectral for the exponential distance matrix with the similarity matrix $S$ for $d$ distinct non-zero values of $q$, then $\D_q^GS = S\D_q^H$ for $d$ values of $q$. Thus, $(\D_q^GS)_{i,j} = (S\D_q^H)_{i,j}$. We write $(\D_q^GS = \D_q^HS)_{i,j}$ as follows: 

   \begin{align*}
       x_0 + x_1q +x_2q^2 + \dots + x_dq^d = y_0 +y_1q +y_2q^2 + \dots + y_dq^d
   \end{align*}
   where $x_k$ is the sum of the entries in the $j$th column of $S$ corresponding to $q^k$ and $y_k$ is the sum of the entries in the $i$th row of $S$ corresponding to $q^k$. It's possible for $x_k$ and $y_k$ to be $0$ for some $q^k$ terms. We subtract the right-hand side from the equation to get
   
   \begin{align}
   \label{eq:exp_dist_mult_entry}
       (x_0-y_0)+(x_1-y_1)q + (x_2-y_2)q^2 + \dots + (x_d-y_d)q^d = 0.
   \end{align}
   Since we assumed $G$ and $H$ are cospectral for $d$ distinct non-zero values of $q$, these same values of $q$ must be roots for the polynomial above. Since the polynomial has $d$ non-zero roots in addition to a root of $q=0$, the polynomial must have $d+1$ roots. Therefore the polynomial has degree $d$ and $d+1$ roots so it must be the zero-polynomial by the Fundamental Theorem of Algebra. Therefore, the coefficients of each $q^k$ must be $0$, so Equation \ref{eq:exp_dist_mult_entry} holds for any value of $q$. So, $G$ and $H$ are cospectral for arbitrary $q$.

   Furthermore, since every coefficient of Equation \ref{eq:exp_dist_mult_entry} is zero, we can also replace $q^k$ with any function of $k$ such that
   \begin{align*}
       (x_0-y_0)f(0)+(x_1-y_1)f(1) + (x_2-y_2)f(2) + \dots + (x_d-y_d)f(d) = 0,
   \end{align*}
   which can be rewritten as
   \begin{align*}
       x_0f(0) + x_1f(1) +x_2f(2) + \dots + x_df(d) = y_0f(0) +y_1f(1) +y_2f(2) + \dots + y_df(d).
   \end{align*}
   
Therefore, $S\D_f^G = \D_f^HS$, so $G$ and $H$ are cospectral for the generalized distance matrix.
\end{proof}

Our sufficient condition for generalized distance cospectrality means that if we find that a graph of diameter $d$ is cospectral for $d$ values of $q$ for the same similarity matrix, then the graph is cospectral for the generalized distance matrix. Using this method streamlines searching for generalized distance cospectral graphs and requires a relatively small amount of checks, especially for low diameter graphs.

\section{Cospectral constructions for exponential distance matrix for unique value of $q$}
\label{sec:cospecconstructions}
We have seen cospectral graphs for matrices with functions of the distance, including the exponential distance matrix for all values of $q$. Cospectrality for all $q$ would indicate that the choice of $q$ is arbitrary. However, there are some values of $q$ that have interesting results, as noted by Butler et al. in \cite{expdist}. For example, when letting $q=\frac12$, there are some unique cospectral pairs of graphs. Here, we show a cospectral construction technique that describes two such families, shown in Figures \ref{fig:dq12-cospec-friendship} and \ref{fig:dq12-cospec-complete-star}. 

Our construction shows that the two graphs have the same characteristic polynomial. Recall that the characteristic polynomial of a matrix $M$ is $\mathcal{P}_M(x)=\det(M-xI)$, where 
\begin{align*}
    \det(A)=\sum_{\pi \in S_n}sgn(\pi)a_{1,\pi(1)}a_{2,\pi(2)}\cdots a_{n,\pi(n)}.
\end{align*}

We will also use a recursive formula for the characteristic polynomial of a path graph for the exponential distance matrix derived by Butler et al. in \cite{expdist}.

\begin{prop}[\cite{expdist}]\label{prop:charpoly-recursive}
Let $\mathcal{P}_{P_n}(x)$ be the characteristic polynomial of path graph for the exponential distance matrix on $n$ vertices. Then,\begin{align*}
    \mathcal{P}_{P_{n}}(x) = \big((q^2+1)x-1+q^2\big) \mathcal{P}_{P_{n-1}}(x) - x^2q^2 \mathcal{P}_{P_{n-2}}(x).
\end{align*}    
\end{prop}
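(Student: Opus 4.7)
The plan is to derive the recurrence by performing a single pair of elementary row/column operations on the matrix $xI_n - \mathcal{D}_q^{P_n}$ and then applying two cofactor expansions. Set $N_n = xI_n - \mathcal{D}_q^{P_n}$, so that $N_n(i,j) = x-1$ when $i=j$ and $N_n(i,j) = -q^{|i-j|}$ otherwise, and $\mathcal{P}_{P_n}(x) = \det(N_n)$. The key structural observation is that for each column $j \leq n-2$, the row-$n$ and row-$(n-1)$ entries of $N_n$ are $-q^{n-j}$ and $-q^{n-1-j}$, differing by a factor of exactly $q$; the analogous statement holds for the last two columns.

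First, I would perform the row operation $R_n \leftarrow R_n - qR_{n-1}$. For each $j \leq n-2$ the new row-$n$ entry is $-q^{n-j} - q(-q^{n-1-j}) = 0$; the column-$(n-1)$ entry becomes $-q - q(x-1) = -qx$; and the column-$n$ entry becomes $(x-1) - q(-q) = x - 1 + q^2$. Next, perform $C_n \leftarrow C_n - qC_{n-1}$, which preserves the determinant. By the analogous calculation, this zeros every entry of column $n$ in rows $1,\dots,n-2$; the row-$(n-1)$, column-$n$ entry becomes $-qx$; and the row-$n$, column-$n$ entry becomes $(x-1+q^2) - q(-qx) = (q^2+1)x - (1-q^2)$. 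The zeros just created in row $n$ are undisturbed, since the column operation only touches column $n$.

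Now cofactor-expand the modified matrix along its last column. Only rows $n-1$ and $n$ contribute, giving
\[
\det(N_n) \;=\; (-1)^{(n-1)+n}(-qx)\,\det(M_{n-1,n}) \;+\; \bigl((q^2+1)x-1+q^2\bigr)\,\det(M_{n,n}),
\]
where $M_{i,n}$ denotes the minor obtained by deleting row $i$ and column $n$ from the modified matrix. The minor $M_{n,n}$ is exactly $N_{n-1}$, contributing the factor $\mathcal{P}_{P_{n-1}}(x)$. For $M_{n-1,n}$, the row operation ensures that its final row (inherited from row $n$, restricted to columns $1,\dots,n-1$) equals $(0,\dots,0,-qx)$; expanding this $(n-1)\times(n-1)$ determinant along that row leaves a single surviving cofactor $(-1)^{(n-1)+(n-1)}(-qx)\det(N_{n-2}) = -qx\,\mathcal{P}_{P_{n-2}}(x)$. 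Substituting back and collecting the sign $(-1)^{(n-1)+n} = -1$ yields exactly $\bigl((q^2+1)x-1+q^2\bigr)\mathcal{P}_{P_{n-1}}(x) - x^2q^2\,\mathcal{P}_{P_{n-2}}(x)$.

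The argument is essentially careful bookkeeping; the only subtle point is verifying that the row and column operations do not interfere, which is immediate because outside the $2\times 2$ bottom-right corner they touch disjoint indices. The base cases $\mathcal{P}_{P_1}(x) = x-1$ and $\mathcal{P}_{P_2}(x) = (x-1)^2 - q^2$ make the recurrence well-defined from $n=2$ onward (with the empty-determinant convention $\mathcal{P}_{P_0}(x) = 1$).
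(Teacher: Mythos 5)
Your proof is correct. Note that the paper itself offers no argument for this proposition --- it is imported verbatim from \cite{expdist} --- so there is nothing to compare against; your derivation stands as a self-contained justification. The bookkeeping all checks out: the operations $R_n \leftarrow R_n - qR_{n-1}$ and $C_n \leftarrow C_n - qC_{n-1}$ do clear rows/columns $1,\dots,n-2$ because consecutive distances along a path differ by exactly one, the two operations touch disjoint entries outside the bottom-right $2\times 2$ corner, the surviving corner entries are $-qx$ and $(q^2+1)x-1+q^2$ as you computed, and the two cofactor expansions produce $\bigl((q^2+1)x-1+q^2\bigr)\mathcal{P}_{P_{n-1}}(x) - q^2x^2\,\mathcal{P}_{P_{n-2}}(x)$ with the signs landing correctly; I verified the result independently for $n=2$ and $n=3$. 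One small caveat worth flagging: your argument (and the recurrence itself) uses the convention $\mathcal{P}(x)=\det(xI-M)$ from the paper's introduction, not the convention $\det(M-xI)$ restated just before the proposition --- under the latter the recurrence would acquire a sign on the first term --- so you should state explicitly which convention you are using.
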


We now state our construction. 

\begin{thm}
\label{box-windmill cospec}
    The graphs $G$ and $H$ in Figure \ref{fig:dq12-cospec-friendship} are cospectral on the exponential distance matrix when $q = \frac{1}{2}$. 
\end{thm}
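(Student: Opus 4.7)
The plan is to establish cospectrality at $q=\tfrac12$ by computing the characteristic polynomials $\mathcal{P}_G(x)$ and $\mathcal{P}_H(x)$ of the two exponential distance matrices directly and verifying that they coincide at this specific value. Since the theorem singles out $q=\tfrac12$ and the surrounding section emphasizes that this is a value-specific phenomenon, the argument cannot be bootstrapped from cospectrality at several nearby values via Theorem \ref{thm:diamqallq}; some arithmetic feature particular to $\tfrac12$ must do the work in the final comparison.

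First, I would exploit the automorphism groups of both $G$ and $H$ to set up equitable partitions. The windmill/friendship side has a central hub vertex fixed by its automorphism group, with the remaining vertices partitioning naturally into triangle orbits; taking the hub as a singleton cell and each triangle as a further cell block-diagonalizes $\mathcal{D}_q^H$ after a suitable change of basis. Each triangle contributes identical within-cell eigenvalues (explicit rational functions of $q$), and the orbit structure gives a small quotient matrix, indexed by the cells, that captures the remaining spectrum. The box-like graph $G$ admits an analogous symmetry reduction; wherever a path-like substructure appears in the reduced matrix, I would invoke the recursion in Proposition \ref{prop:charpoly-recursive} to obtain the relevant factor in closed form.

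With both $\mathcal{P}_G(x)$ and $\mathcal{P}_H(x)$ expressed as products of small determinants, the comparison reduces to a polynomial identity in $q$ and $x$. At $q=\tfrac12$, expressions such as $1-2q$, $(1-q)-q^2$, and $q-2q^2$ vanish or factor nicely, collapsing terms that are genuinely different for generic $q$. These cancellations are the structural reason the construction is confined to $q=\tfrac12$, and the final step is a direct substitution to verify the desired identity $\mathcal{P}_G(x)=\mathcal{P}_H(x)$ at this value.

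The main obstacle is the bookkeeping: the distance matrices are dense, and even after symmetry reduction, the quotient matrices must be set up with enough care that the polynomial identity becomes transparent. A secondary concern is confirming that the construction really is specific to $q=\tfrac12$, rather than a coincidence I am missing at other values. I would address this by treating $\mathcal{P}_G(x)-\mathcal{P}_H(x)$ as a polynomial in $q$ whose coefficients are themselves polynomials in $x$, and checking that $q=\tfrac12$ is in fact the unique nonzero common root; this both sharpens the theorem and rules out that the construction is an unnoticed special case of a broader family covered earlier in the paper.
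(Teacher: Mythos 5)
Your outline for $H$ is fine (in fact it is easier than you make it: $H$ is disconnected, so $\mathcal{P}_{\Dq^H}(x)$ is simply the product of the characteristic polynomial of the fixed $6$-vertex component and $\mathcal{P}_{\Dq^{P_{n-6}}}(x)$; no orbit analysis is needed). The gap is on the $G$ side. You claim that a symmetry reduction will express $\mathcal{P}_{\Dq^G}(x)$ as a ``product of small determinants,'' but $G$ is connected and contains the induced path $P_{n-6}$, whose vertices are (for generic $n$) all in distinct orbits. The orbit partition only peels off the three antisymmetric eigenvectors supported on $\{1,2\}$, $\{3,4\}$, $\{5,6\}$ (the factor $(q+x-1)^3$); what remains is a dense quotient matrix of size about $n-3$ whose determinant still depends on $n$, and for generic $q$ it does \emph{not} have $\mathcal{P}_{\Dq^{P_{n-6}}}(x)$ as a factor --- that failure is precisely the content of the theorem. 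So the plan of reducing to small determinants and then ``substituting $q=\tfrac12$'' cannot be carried out as stated, and your invocation of Proposition \ref{prop:charpoly-recursive} ``to obtain the relevant factor in closed form'' misidentifies what the recursion is for: it is a three-term recurrence, not a closed form, and its role is not to evaluate a path factor but to \emph{create} one.

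Concretely, the missing idea is the following. Because the rows of the off-diagonal blocks coupling $\{1,2\}$ and $\{3,4,5,6\}$ to the path are $q$ times the boundary rows of $\Dq^{P_{n-6}}$, one can perform determinant-preserving row and column operations that sparsify $\Dq^G-xI$, leaving a single $-qx$ entry linking each attachment group to an end of the path block. Expanding the determinant then yields a linear combination
\begin{align*}
\alpha(q,x)\,\mathcal{P}_{\Dq^{P_{n-6}}}(x)+\beta(q,x)\,\mathcal{P}_{\Dq^{P_{n-7}}}(x)+\gamma(q,x)\,\mathcal{P}_{\Dq^{P_{n-8}}}(x),
\end{align*}
and the whole argument hinges on checking that the pair $(\beta,\gamma)$ is proportional to the coefficient pair $\bigl((q^2+1)x-1+q^2,\,-q^2x^2\bigr)$ of the recurrence in Proposition \ref{prop:charpoly-recursive} \emph{exactly when} $q=\tfrac12$, which collapses the last two terms into a multiple of $\mathcal{P}_{\Dq^{P_{n-6}}}(x)$ and lets the path polynomial factor out for comparison with $H$. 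Your sketch gestures at ``$q=\tfrac12$-specific cancellations'' but does not supply this mechanism, nor any uniform-in-$n$ computation that would produce the three-term combination in the first place; without it the proof does not go through. (Your final worry about uniqueness of $q=\tfrac12$ is also beside the point: the theorem only asserts cospectrality at $q=\tfrac12$, not non-cospectrality elsewhere.)
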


\begin{proof}
In order to prove cospectrality between $G$ and $H$, we will show they have the same characteristic polynomial. 

 Since graph $H$ has two separate connected components, we find its characteristic polynomial is the characteristic of each component multiplied together. Therefore
\begin{align*}
    \mathcal{P}_{H}(x) = (2q^2 - q + x - 1)(q + x - 1)^3 \cdot (2q^3 - 2q^2x - 5q^2 - 2qx + x^2 + 2q - 2x + 1) \cdot \mathcal{P}_{P_{n-6}}(x).
\end{align*}
Now, we turn our attention to finding the characteristic polynomial of graph $G$ in terms of $\mathcal{P}_{P_{n-6}}(x)$. Let us partition the graph into three parts: vertices $1,2$ which are both adjacent to vertex $n$, vertices $3,4,5,6$ which are all adjacent to vertex $7$, and vertices $7,8,\ldots, n$, the induced subgraph of which forms a path on $n-6$ vertices. 
Therefore, 
\begin{align*}
    \Dq^G=\left[\begin{array}{ccc}
        \Dq^{G_{[1,2]}} & A & B \\
        A^T & \Dq^{G_{[3,4,5,6]}} & C\\
        B^T & C^T &\Dq^{P_{n-6}}
    \end{array}\right].
\end{align*}
Consider $\mathcal{P}_{\Dq^G}(x)=\det(\Dq^G-xI)$. So we are looking to find the determinant of 
\begin{align*}
    \left[\begin{array}{ccc}
        \Dq^{G_{[1,2]}}-xI & A & B \\
        A^T & \Dq^{G_{[3,4,5,6]}}-xI & C\\
        B^T & C^T &\Dq^{P_{n-6}}-xI
    \end{array}\right].
\end{align*}
Looking at the submatrix $\left[\begin{array}{c}
     B\\
     C
\end{array}\right] =  \left[\begin{array}{ccccc}
q^{n-6} & q^{n-7} & \dots & q^2 & q\\
        q^{n-6} & q^{n-7} & \dots & q^2 & q\\
        q & q^2 & \dots & q^{n-7} & q^{n-6}\\
         q & q^2 & \dots & q^{n-7} & q^{n-6}\\
          q & q^2 & \dots & q^{n-7} & q^{n-6}\\
         q & q^2 & \dots & q^{n-7} & q^{n-6}\\
\end{array}\right]$, we can see that many of the rows are similar to each other.  Further, rows $1$ and $2$ are $q$ times the last row of $\Dq^{P_{n-6}}$ and rows $3,4,5,6$ are $q$ times the first row of $\Dq^{P_{n-6}}$.  So we can add multiples of rows and columns to each other, which does not effect the determinant. After the row and column operations, we will have the resulting matrix 

\begin{align*}
\tiny{
    \left[\begin{array}{cc|cccc|ccccc}
         2q+2x-2 & 1-q-x & 0 & 0 & 0 & 0& 0 & 0 & \dots & 0 & 0\\
    1-q-x & q^2x+q^2+x-1 & 0 & 0 & 0 & 0&0 & 0 & \dots & 0 & -qx\\
    \hline
    0 & 0 & 2q+2x-2 & 1-q-x & 0 & 0 & 0 & 0 & \dots & 0 & 0\\
    0 & 0 & 1-q-x & 2q^2+2x-2 & q+x-1 & -q^2-x+1 & 0 & 0 & \dots & 0 & 0\\
    0 & 0 & 0 & q+x-1 & 2q+2x-2 & 1-q-x & 0 & 0 & \dots & 0 & 0\\
    0 & 0 & 0 & -q^2-x+1 & 1-q-x & q^2x+q^2+x-1 & -qx & 0 & \dots & 0 & 0\\
    \hline
     0 & 0 & 0 & 0 & 0 & -qx &  &  &  &  & \\
     0 & 0 & 0 & 0 & 0 & 0 &  &  &  &  & \\
     \vdots & \vdots & \vdots & \vdots & \vdots & \vdots & & & \Dq^{P_{n-6}}-xI & & \\
     0 & 0 & 0 & 0 & 0 & 0 &  &  &  &  & \\
     0 & -qx & 0 & 0 & 0 & 0 &  &  &  &  & \\
         \end{array}\right].}
\end{align*}

 We know that the determinant can be expressed in terms of permutations, so consider the possible terms of $\det(\Dq^G-xI)$. If the selection for the second column is not in the $[1,2]$ block, then the selection for the last column of the nonzero terms will be the second row. Similarly, if the selection for the sixth column is not in the $[3,4,5,6]$ block, then the selection for the seventh column of the nonzero terms will be the sixth row. 

 Therefore, we have four cases to consider.
 \begin{itemize}
     \item The selection for the second column is in the $[1,2]$ block and the selection for the sixth column is in the $[3,4,5,6]$ block.
     \item The selection for the second column is in the $[1,2]$ block and the selection for the sixth column is \textit{not} in the $[3,4,5,6]$ block. 
      \item The selection for the second column is \textit{not} in the $[1,2]$ block and the selection for the sixth column is in the $[3,4,5,6]$ block. 
       \item The selection for the second column is \textit{not} in the $[1,2]$ block and the selection for the sixth column is \textit{not} in the $[3,4,5,6]$ block.
 \end{itemize}
    
 The resulting term for each of the cases is (respectively)
 \begin{itemize}
     \item $(2q^2-q+x-1)(q+x-1)^3(4q^2x+2q^2-q+x-1)(2q^2x+2q^2-q+x-1)\mathcal{P}_{\Dq^{P_{n-6}}}(x)$
     \item $4(2q^2-q+x-1)(q+x-1)^3(2q^2x+2q^2-q+x-1)\mathcal{P}_{\Dq^{P_{n-7}}}(x)$
     \item $2(2q^2-q+x-1)(q+x-1)^3(4q^2x+2q^2-q+x-1)\mathcal{P}_{\Dq^{P_{n-7}}}(x)$
     \item $(2q^2-q+x-1)(q+x-1)^3(8q^4x^4)\mathcal{P}_{\Dq^{P_{n-8}}}(x)$
 \end{itemize}

\begin{figure}
    \centering
    \setlength{\tabcolsep}{24pt}
    \begin{tabular}{cc}
      \begin{tikzpicture}[scale=0.7]
      \node at (3,-0.5) {$P_{n-6}$};
        \node[vertex](0) at (0:0) {7};
        \node[vertex](1) at (0:9) {1};
        \node[vertex](2) at ({6+3*cos(60)},{3*sin(60)}) {2};
        \node[vertex](3) at (120:3) {3};
        \node[vertex](4) at (180:3) {4};
        \node[vertex](5) at (240:3) {5};
        \node[vertex](6) at (300:3) {6};
        \node[vertex](7) at (1.5,0) {};
        \node[vertex](8) at (4.5,0) {};
        \node[vertex](9) at (6,0) {$n$};
        
        \node at (3,0) {\dots};
        
        \draw[thick](9)--(1) (9) -- (2) (0) -- (3) (0) -- (4) (0) -- (5) (0) -- (6)(1) -- (2) (3)--(4) (5)--(6)  (0)--(7)--(2.5,0) (3.5,0)--(8)--(9);
    \end{tikzpicture}
      &\begin{tikzpicture}[scale=0.7]
        \node[vertex](0) at (0,0) {$4$};
        \node[vertex](1) at (2,2) {7};
        \node[vertex](2) at (0,2) {2};
        \node[vertex](3) at (0,-2) {6};
        \node[vertex](4) at (-2,-2) {5};
        \node[vertex](5) at (-2,0) {3};
        \node[vertex](6) at (-2,2) {1};
        \node[vertex](7) at (2,-2) {$n$};
        \node[vertex](8) at (2,1) {};
        \node[vertex](9) at (2,-1) {};
        
        \node at (2,0.2) {\vdots};
        \node at (3,0.2) {$P_{n-6}$};
        
        \draw[thick](0) -- (2) (0) -- (3) (0) -- (4) (0) -- (5) (0) -- (6)(4) -- (5)(3)--(5) (3)--(4) (5)--(6)(2)--(6)(2)--(5) (1)--(8)--(2,0.5) (2,-0.5)--(9)--(7);
    \end{tikzpicture}
     \\ 
      (a) & (b)   
    \end{tabular}
    \caption{(a) Graph $G$ with a path of length $n-6$ as an induced subgraph. (b) Graph $H$ with a path of length $n-6$ as a connected component. Only when $q=\frac12$, $G$ and $H$ are cospectral for the exponential distance matrix.}
    \label{fig:dq12-cospec-friendship}
\end{figure}
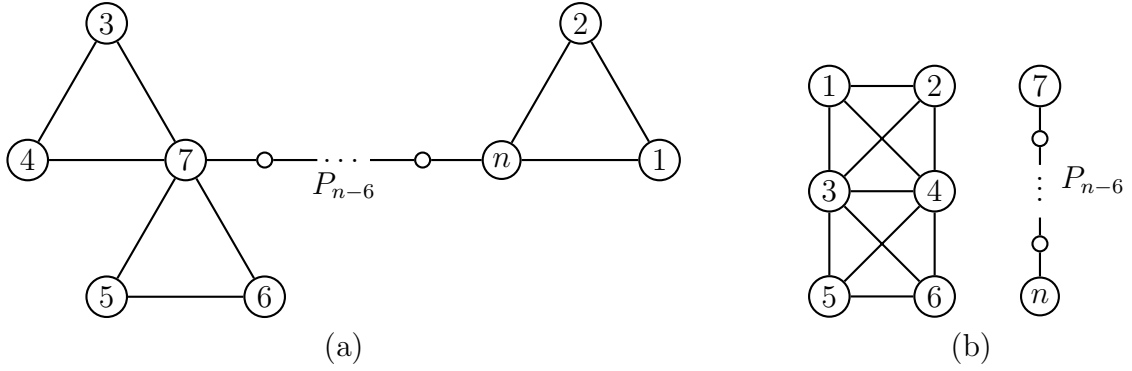

When we combine these equations, the signs of the second and third cases will be negative and the signs of the first and fourth cases will be positive. Therefore,
\small{\begin{align*}
\mathcal{P}_{\Dq^G}(x) &= (2q^2-q+x-1)(q+x-1)^3\Bigg[(4q^2x+2q^2-q+x-1)(2q^2x+2q^2-q+x-1) \cdot \mathcal{P}_{\Dq^{P_{n-6}}}(x) 
\\
&- 8q^2x^2\left(\left(2q^2x+\frac{3}{2}q^2-\frac{3}{4}q+\frac{3}{4}x-\frac{3}{4}\right) \cdot \mathcal{P}_{\Dq^{P_{n-7}}}(x) - x^2q^2 \cdot \mathcal{P}_{\Dq^{P_{n-8}}}(x)\right)\Bigg].
\end{align*}
}

When $q = \frac{1}{2}$, we can apply Proposition \ref{prop:charpoly-recursive} so that
\begin{align*}
   \left(2q^2x+\frac{3}{2}q^2-\frac{3}{4}q+\frac{3}{4}x-\frac{3}{4}\right) \cdot \mathcal{P}_{\Dq^{P_{n-7}}}(x) - q^2x^2 \cdot \mathcal{P}_{\Dq^{P_{n-8}}}(x) \\
   = 
    \big((q^2+1)x-1+q^2\big) \cdot \mathcal{P}_{\Dq^{P_{n-7}}}(x) - q^2x^2 \cdot \mathcal{P}_{\Dq^{P_{n-8}}}(x).
\end{align*}

Thus, we can only factor out the characteristic polynomial of the path when $q = \frac12$. So the characteristic polynomial of $G$ is
\begin{align*}
\mathcal{P}_{\Dq^G}(x) =& (2q^2-q+x-1)(q+x-1)^3 \\ 
&\cdot \Big[(4q^2x+2q^2-q+x-1)(2q^2x+2q^2-q+x-1) - 8q^2x^2\Big] \cdot \mathcal{P}_{\Dq^{P_{n-6}}}(x).
\end{align*} 

When $q = \frac12$, we verify that $\mathcal{P}_{\Dq^G}(x) = \mathcal{P}_{\Dq^H}(x)$.

\end{proof}

\begin{figure}
    \centering
    \setlength{\tabcolsep}{24pt}
    \begin{tabular}{cc}
        \begin{tikzpicture}
            \node at (3,-0.5) {$P_{n-6}$};
        
            \node[vertex](0) at (6,1) {1};
            \node[vertex](1) at (72:1) {3};
            \node[vertex](2) at (144:1) {4};
            \node[vertex](3) at (216:1) {5};
            \node[vertex](4) at (282:1) {6};
            \node[vertex](5) at (1,0) {7};
            \node[vertex](6) at (2,0) {};
            \node[vertex](7) at (4,0) {};
            \node[vertex](8) at (5,0) {$n$};
            \node[vertex](9) at (6,-1) {2};
        
            \node at (3,0) {\dots};
        
            \draw[thick](1)--(2) (1)--(3) (1)--(4) (1)--(5) (2)--(3) (2)--(4) (2)--(5) (3)--(4) (3)--(5) (4)--(5) (5)--(6) (6)--(2.5,0) (3.5, 0)--(7) (7)--(8) (8)--(9) (8)--(0);

            \node at (0,-1.45) {};
        \end{tikzpicture}
        & \begin{tikzpicture}
            \node[vertex](0) at (0:1) {5};
            \node[vertex](1) at (60:1) {6};
            \node[vertex](2) at (120:1) {1};
            \node[vertex](3) at (180:1) {2};
            \node[vertex](4) at (240:1) {3};
            \node[vertex](5) at (300:1) {4};

            \node[vertex](6) at (2,1.5) {7};
            \node[vertex](7) at (2,0.75) {};
            \node[vertex](8) at (2,-0.75) {};
            \node[vertex](9) at (2,-1.5) {$n$};
        
            \node at (2,0.1) {\vdots};
            \node at (2.5, 0) {$P_{n-6}$};
        
            \draw[thick](0)--(1) (0)--(2) (0)--(4) (0)--(5) (1)--(2) (1)--(3) (1)--(4) (1)--(5) (2)--(3) (2)--(4) (2)--(5) (3)--(4) (3)--(5) (4)--(5) (6)--(7)--(2, 0.3) (2, -0.3)--(8)--(9);
        \end{tikzpicture} \\
        (a) & (b)   
    \end{tabular}
    \caption{(a) Graph $G$ with a path of length $n-6$ as an induced subgraph. (b) Graph $H$ with a path of length $n-6$ as a connected component. Only when $q=\frac12$, $G$ and $H$ are cospectral for the exponential distance matrix.}
    \label{fig:dq12-cospec-complete-star}
\end{figure}
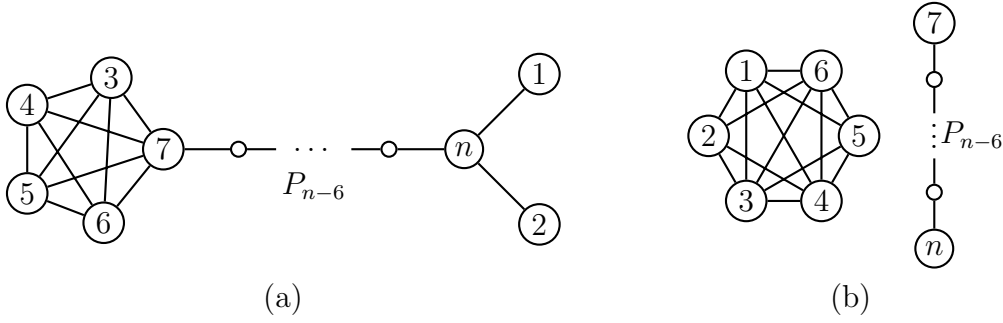

After proving cospectrality of the family in Theorem \ref{box-windmill cospec}, we found another family that is uniquely cospectral with a path extension when $q = \frac12$.

\begin{thm}
    The graphs $G$ and $H$ in Figure \ref{fig:dq12-cospec-complete-star} are cospectral on the exponential distance matrix for $q = \frac{1}{2}$.
\end{thm}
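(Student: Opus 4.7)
The plan is to mirror the strategy of the proof of Theorem \ref{box-windmill cospec}, adapting the bookkeeping to the new local structure. Since $H$ is the disjoint union of $K_6$ minus an edge (on vertex set $\{1,\dots,6\}$) and a path $P_{n-6}$, its characteristic polynomial for $\Dq$ factors as $\mathcal{P}_H(x)=\mathcal{P}_{K_6-e}(x)\cdot \mathcal{P}_{P_{n-6}}(x)$, and $\mathcal{P}_{K_6-e}(x)$ can be written down directly since $\Dq^{K_6-e}$ has only two distinct off-diagonal values.

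For $G$, I would partition the vertex set into three blocks: $\{1,2\}$ (the pendants attached to vertex $n$), $\{3,4,5,6\}$ (the four $K_5$-vertices other than $7$), and $\{7,\dots,n\}$ (the path). Relative to this partition, $\Dq^{G_{[1,2]}}$ has off-diagonal entry $q^2$ (since $1$ and $2$ are at distance $2$ through $n$), while $\Dq^{G_{[3,4,5,6]}}$ has all off-diagonal entries equal to $q$ (since $\{3,4,5,6\}$ induces $K_4$ inside the $K_5$). The crucial observation, exactly as in the previous proof, is that each row of the $\{1,2\}$-to-path block equals $q$ times the last row of $\Dq^{P_{n-6}}$, while each row of the $\{3,4,5,6\}$-to-path block equals $q$ times the first row of $\Dq^{P_{n-6}}$. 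Row and column operations will then zero out almost all off-diagonal path entries at the cost of leaving a few $-qx$ entries at specific positions and adjusting the two diagonal blocks.

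Next, I would expand the reduced determinant by permutation analysis, splitting into four cases according to whether the distinguished-column selections in the $\{1,2\}$ and $\{3,4,5,6\}$ blocks stay within those blocks or are forced into the path block by the $-qx$ entries. Each of the four cases contributes a polynomial coefficient multiplied by $\mathcal{P}_{P_{n-6}}(x)$, $\mathcal{P}_{P_{n-7}}(x)$, or $\mathcal{P}_{P_{n-8}}(x)$. Finally, I would apply Proposition \ref{prop:charpoly-recursive} to collapse the $\mathcal{P}_{P_{n-7}}$ and $\mathcal{P}_{P_{n-8}}$ contributions into a multiple of $\mathcal{P}_{P_{n-6}}$. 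This collapse succeeds exactly when the coefficient of $\mathcal{P}_{P_{n-7}}(x)$ matches the recursion factor $(q^2+1)x-1+q^2$, and that matching condition pins down $q=\tfrac12$.

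The main obstacle is the algebraic bookkeeping in the case analysis: at $q=\tfrac12$ one must verify that the polynomial factor that finally multiplies $\mathcal{P}_{P_{n-6}}(x)$ in $\mathcal{P}_{\Dq^G}(x)$ agrees exactly with $\mathcal{P}_{K_6-e}(x)$. The structural differences from Theorem \ref{box-windmill cospec}—that $1$ and $2$ are now non-adjacent and that $\{3,4,5,6\}$ induces $K_4$ rather than the two-edge matching of the previous theorem—change every individual coefficient produced by the four cases, but no new ideas beyond those of the previous proof are required, only careful recomputation.
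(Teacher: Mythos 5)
Your proposal follows essentially the same route as the paper's proof: the paper likewise writes $\mathcal{P}_{H}$ as the product of the characteristic polynomial of the $K_6$-minus-an-edge component with $\mathcal{P}_{P_{n-6}}(x)$, and reuses the partition, row operations, and four-case permutation expansion of Theorem \ref{box-windmill cospec}, with the $\mathcal{P}_{P_{n-7}}$-coefficient $2q^2x+\tfrac32 q^2-\tfrac34 q+\tfrac34 x-\tfrac34$ matching the recursion factor $(q^2+1)x-1+q^2$ precisely at $q=\tfrac12$, as you predict. What you leave implicit --- the explicit polynomials produced by the four cases and the final check that the collapsed factor equals the characteristic polynomial of the $K_6$-minus-an-edge component at $q=\tfrac12$ --- is exactly the computation the paper records, so your plan is correct and complete in outline.
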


\begin{proof}
    The proof of cospectrality of this family mirrors the proof of Theorem \ref{box-windmill cospec}. The notable difference is within the characteristic polynomials. 
    The characteristic polynomial of $H$ is 
\begin{align*}
\mathcal{P}_{\Dq^H}(x) = (q^2+x-1)(q+x-1)^3 (3q^3 - q^2x - 7q^2 - 3qx + x^2 + 3q - 2x + 1) \mathcal{P}_{\Dq^{P_{n-6}}}(x).
\end{align*}

We can follow the same partition, row operations, and permutation selection procedure as described in the proof of Theorem \ref{box-windmill cospec}. Our resulting characteristic polynomial will be
\begin{align*}
\mathcal{P}_{\Dq^G}(x) = (q^2+x-1)(q+x-1)^3\Bigg[(4q^2x+4q^2-3q+x-1)(2q^2x+q^2+x-1) \cdot \mathcal{P}_{\Dq^{P_{n-6}}}(x) 
\\
- 8q^2x^2\left(\left(2q^2x+\frac{3}{2}q^2-\frac{3}{4}q+\frac{3}{4}x-\frac{3}{4}\right) \cdot \mathcal{P}_{\Dq^{P_{n-7}}}(x) - x^2q^2 \cdot \mathcal{P}_{\Dq^{P_{n-8}}}(x)\right)\Bigg].
\end{align*}

As in Theorem \ref{box-windmill cospec}, we can use the recursive formula of a path from \cite{expdist} to combine the characteristic polynomials $\mathcal{P}_{\Dq(P_{n-7})}(x)$ and $\mathcal{P}_{\Dq(P_{n-8})}(x)$. As in the previous proof, this combination only occurs when $q = \frac12$.
So, we write
\begin{align*}
   \left(2q^2x+\frac{3}{2}q^2-\frac{3}{4}q+\frac{3}{4}x-\frac{3}{4}\right) \cdot \mathcal{P}_{\Dq^{P_{n-7}}}(x) - q^2x^2 \cdot \mathcal{P}_{\Dq^{P_{n-8}}}(x)
   \\
   =  \big((q^2+1)x-1+q^2\big) \cdot \mathcal{P}_{\Dq^{P_{n-7}}}(x) - q^2x^2 \cdot \mathcal{P}_{\Dq^{P_{n-8}}}(x)
    = \mathcal{P}_{\Dq^{P_{n-6}}}(x)
\end{align*}

 Therefore, assuming $q = \frac12$, we can rewrite the characteristic polynomial of graph $G$ as 
 
\small{\begin{align*}
\mathcal{P}_{\Dq^G}(x) = (q^2+x-1)(q+x-1)^3 \Big[(4q^2x+4q^2-3q+x-1)(2q^2x+q^2+x-1) - 8q^2x^2\Big] \mathcal{P}_{\Dq^{P_{n-6}}}(x).
\end{align*}
}

When $q=\frac12$, $\mathcal{P}_{\Dq^G}(x) = \mathcal{P}_{\Dq^H}(x)$, so these two families are cospectral when $q=\frac12$.    
\end{proof}

\section{Future Work}

A question that still remains to be answered that we proposed in Section \ref{sec:cospecforallq} is: ``When does cospectrality hold for all values of $q$?'' For graphs on a small number ($\leq 9$) vertices, the answer appears to be independent of the graph structure. 
\begin{conj}
    If graphs are cospectral for two different $q$ values (not $0$ or $1$) using the same similarity matrix, then they are cospectral for all values of $q$.
\end{conj}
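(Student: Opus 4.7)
The natural approach is to emulate and extend the polynomial interpolation argument of Theorem~\ref{thm:diamqallq}. Set $d = \max\{\diam(G),\diam(H)\}$ and expand $\Dq^G = \sum_{k=0}^{d} q^k A_k^G$, where $A_k^G$ is the $0/1$-matrix recording which pairs of vertices of $G$ are at distance exactly $k$ (and similarly for $H$). Then
\[ P(q) := \Dq^G S - S \Dq^H = \sum_{k=0}^{d} q^k \bigl( A_k^G S - S A_k^H \bigr), \]
so each entry of $P(q)$ is a polynomial in $q$ of degree at most $d$. The hypothesis supplies two non-trivial roots $q_1, q_2 \notin \{0,1\}$, and the plan is to enlarge this to at least $d+1$ distinct roots of every entry, so that by the Fundamental Theorem of Algebra $P(q) \equiv 0$ identically. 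This yields cospectrality at every $q$, and hence, by Theorem~\ref{thm:diamqallq}, cospectrality for every generalized distance matrix.

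The first step records two automatic roots. Because $A_0^G = A_0^H = I$ we have $P(0) = S - S = 0$, and because $\sum_{k=0}^{d} A_k^G = J_n = \sum_{k=0}^{d} A_k^H$ on every connected component, $P(1) = J_n S - S J_n$. Thus $q=1$ is automatically a root of every entry precisely when $S$ commutes with $J_n$. I would derive this commutation from the two-value hypothesis itself, for instance by subtracting the relations $\Dq^G S = S \Dq^H$ at $q_1$ and $q_2$, dividing by $q_1 - q_2$, and manipulating the resulting linear combinations of the coefficient matrices $A_k^G S - S A_k^H$ so as to isolate the relation $J_n S = S J_n$. Granting this, the polynomial entries have at least the four distinct roots $\{0, 1, q_1, q_2\}$, and the conjecture follows whenever $d \le 3$.

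For larger diameter, the plan is to manufacture additional structural roots. Three directions I would pursue are: (i) using the symmetry of $\Dq^G$ and $\Dq^H$ to conclude that $S^T$ is also a similarity at $q_1, q_2$ and combining this with $S$ to extract further algebraic constraints on $A_k^G S - S A_k^H$; (ii) analyzing $P(q)$ under substitutions like $q \mapsto -q$ and $q \mapsto 1/q$ and leveraging the $0/1$ (hence integral and nonnegative) structure of the $A_k^G, A_k^H$; (iii) showing that if $q_1, q_2$ are both roots of every entry, then some symmetric combination such as $q_1 q_2$ must also be a root, so that iteration generates infinitely many roots and closes the argument.

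\textbf{Main obstacle.} A two-value hypothesis gives only two scalar equations per entry of $P(q)$, while each polynomial has up to $d+1$ coefficients; the conjecture is therefore an assertion of rigidity, namely that common cospectrality at two distinct values through a shared $S$ already pins $S$ down strongly enough---probably by forcing $S$ to commute with a large commutative algebra generated by the $A_k^G$---to force cospectrality for all $q$. Making this rigidity precise is where I expect the real work to lie, and it is consistent with the empirical evidence reported in Section~\ref{sec:data}.
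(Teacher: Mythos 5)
First, note that the statement you are addressing is presented in the paper as a \emph{conjecture}: the authors offer no proof of it, only the remark that it follows from Theorem \ref{thm:diamqallq} when the diameter is $2$. So the question is whether your proposal closes the gap they left open, and it does not. Your framework is the right one (and is the same one the paper uses for Theorem \ref{thm:diamqallq}): each entry of $P(q)=\Dq^GS-S\Dq^H$ is a polynomial of degree at most $d$, and one needs $d+1$ distinct roots to force it to vanish identically. The hypothesis hands you $q_1,q_2$, and $A_0^G=A_0^H=I$ hands you $q=0$, so the entire content of the conjecture is concentrated in diameters $d\ge 3$ --- and everything you say about that regime is a list of unexecuted ideas rather than an argument.

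Two concrete gaps. First, the root $q=1$ is not free: $P(1)=J_nS-SJ_n$, and your proposed derivation of $J_nS=SJ_n$ from the two-value hypothesis does not go through as described. Writing $M_k=A_k^GS-SA_k^H$, the hypothesis gives exactly the two linear relations $\sum_{k=1}^{d}q_i^{k}M_k=0$ for $i=1,2$, whereas you need $\sum_{k=1}^{d}M_k=0$; for $d\ge 3$ the all-ones coefficient vector is generically not in the span of $(q_1,\dots,q_1^{d})$ and $(q_2,\dots,q_2^{d})$, so no linear manipulation of those two relations can produce it. Second, even granting $q=1$ you only reach $d\le 3$; for $d\ge 4$ your directions (i)--(iii) are speculative, and (iii) in particular --- that $q_1q_2$ must be a root whenever $q_1$ and $q_2$ are --- is asserted with no mechanism and is false for general degree-$d$ polynomials, so it could only come from some structural rigidity of $S$ that you have not established. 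You have correctly located where the difficulty lives (two evaluations underdetermine the $d$ coefficient matrices $M_k$), but locating the obstacle is not the same as overcoming it; as written this is a research plan consistent with the paper's open problem, not a proof of it.
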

 This conjecture holds for diameter $2$ graphs with our results. 

 If a pair is only cospectral for exactly one (non-trivial) value of $ 0 \leq q \leq 1$, on a small number of vertices, then $q=\frac12$. Expanding the possible values of $q$, this behavior also happens when $q=-2,3,\pm 2i$. Further work and understanding into these numbers and their relation to special cospectral pairs is needed.

 Finally, our switching cospectral construction dramatically increases the number of matrices Godsil-McKay Switching applies to. A remaining class of matrices is the distance Laplacian, distance signless Laplacian, and distance normalized Laplacian. All of the matrices can be written in the form $M=\alpha \operatorname{diag}(\sum_{j=1}^n\dist(v_i,v_j))+\beta \D$. Currently, our construction does not apply to this family. Expanding our construction or Godsil-McKay switching to these Laplacian variations would help us understand patterns of cospectrality for more matrices. 

\section*{Acknowledgment}
This research was done primarily at the 2024 Iowa State University Math REU, which was supported through NSF Grant DMS-1950583.

\bibliographystyle{plain} 

\end{document}